\definecolor {processblue}{cmyk}{0.96,0,0,0}
\newcommand\cyr{%
\renewcommand\rmdefault{wncyr}%
\renewcommand\sfdefault{wncyss}%
\renewcommand\encodingdefault{OT2}%
\normalfont
\selectfont}
\DeclareTextFontCommand{\textcyr}{\cyr}
\DeclareFontFamily{OT1}{rsfs}{}
\DeclareFontShape{OT1}{rsfs}{n}{it}{<-> rsfs10}{}
\DeclareMathAlphabet{\mathscr}{OT1}{rsfs}{n}{it}
\numberwithin{equation}{section}
\newtheorem{theorem}{Theorem}[section]
\newtheorem{lem}[theorem]{Lemma}
\newtheorem{cor}[theorem]{Corollary}
\newtheorem{question}{Question}
\theoremstyle{definition}
\newtheorem{prop}[theorem]{Proposition}
\newtheorem{defn}[theorem]{Definition}
\theoremstyle{remark}
\newtheorem{remark}[theorem]{Remark}
\newtheorem{example}[theorem]{Example}
\newcommand{\N}{N}
\DeclareMathOperator{\Hilb}{Hilb}
\renewcommand{\tilde}{\widetilde}
\newcommand{\lk}{\operatorname{lk}}
\newcommand{\Spec}{\operatorname{Spec}}
\newcommand{\reg}{\operatorname{reg}}
\newcommand{\pd}{\operatorname{pd}}
\newcommand{\astar}{\operatorname{ast}}
\newcommand{\depth}{\operatorname{depth}}
\newcommand{\str}{\ensuremath{\operatorname{st}}}
\newcommand{\ho}{\tilde H}
\newcommand{\m}{\mathfrak{m}}
\DeclareMathOperator{\lcm}{lcm}
\DeclareMathOperator{\sd}{sd} 
\newcommand{\set}[1]{\{ #1 \}}
\newcommand{\drawsimplex}[4]{
	\IfEqCase{#1}{%
		{nw}{\draw [ultra thick, draw=black, draw opacity=1, pattern=north west lines, pattern color=blue](#2) \foreach \i in #3{ -- (\i)} -- cycle;}
		{ne}{\draw [ultra thick, draw=black, draw opacity=1, pattern=north east lines, pattern color=blue](#2) \foreach \i in #3{ -- (\i)} -- cycle;}
		{v}{\draw [ultra thick, draw=black, draw opacity=1, pattern=vertical lines, pattern color=blue] (#2) \foreach \i in #3{ -- (\i)} -- cycle;}
 		{h}{\draw [ultra thick, draw=black, draw opacity=1, pattern=horizontal lines, pattern color=blue] (#2) \foreach \i in #3{ -- (\i)} -- cycle;}
		{d}{\draw [ultra thick, draw=black, draw opacity=1, pattern=dots, pattern color=blue] (#2) \foreach \i in #3{ -- (\i)} -- cycle;}
	}[\PackageError{drawsimplex}{Done messed up with #1}{}];

\draw [ultra thick, draw=black, draw opacity=1] \foreach \i in #4 \foreach \j in #4{(\i) -- (\j) };

}
\newcommand{\drawintro}[1]{
\begin{tikzpicture}[scale=#1]
  \node[coordinate] at (0,0) (D){};
  \node at (D) [right] {$\mathbf{D}$};
  \node[coordinate] at (-1.3,0) (E){};
  \node at (E) [left] {$\mathbf{E}$};
  \node[coordinate] at (-0.3,2) (B){};
  \node at (B) [above] {$\mathbf{B}$};
  \node[coordinate] at (-0.3,-2) (G){};
  \node at (G) [below] {$\mathbf{G}$};
  \node[coordinate] at (1.3,-1.3) (H){};
  \node at (H) [below] {$\mathbf{H}$};
  \node[coordinate] at (1.3,1.3) (A){};
  \node at (A) [above] {$\mathbf{A}$};
  \node[coordinate] at (0.3,-1.2) (F){};
  \node at (F) [below] {$\mathbf{F}$};
  \node[coordinate] at (0.3,1.2) (C){};
  \node at (C) [above] {$\mathbf{C}$};
\drawsimplex{v}{D}{{H,G}}{{D,G,H,F}}
  \node at ($.25*(D)+.25*(G)+.25*(H)+.25*(F)$) [above right] {$\mathbf{F_1}$};
\drawsimplex{ne}{A}{{D,B}}{{A,C,D,B}}
  \node at ($.25*(A)+.25*(C)+.25*(D)+.25*(B)$) [below right] {$\mathbf{F_2}$};
\drawsimplex{nw}{B}{{C,D,E}}{{D,C,B,E}}
  \node at ($.25*(D)+.25*(C)+.25*(B)+.25*(E)$) [left] {$\mathbf{F_3}$};
\drawsimplex{h}{D}{{F,G,E}}{{D,G,E,F}}
  \node at ($.25*(D)+.25*(G)+.25*(E)+.25*(F)$) [left] {$\mathbf{F_4}$};
\end{tikzpicture}
}
\newcommand{\drawintroNone}[1]{
\begin{tikzpicture}[scale=#1]
  \node[coordinate] at (.5,-1) (F1){};
  \node at (F1) [below right] {$\mathbf{F_1}$};
  \node[coordinate] at (.5,1) (F2){};
  \node at (F2) [above right] {$\mathbf{F_2}$};
  \node[coordinate] at (-.5,.8) (F3){};
  \node at (F3) [above left] {$\mathbf{F_3}$};
  \node[coordinate] at (-.5,-.8) (F4){};
  \node at (F4) [below left] {$\mathbf{F_4}$};
  \node[coordinate] at (-0,2) (B){};
  \node at (B) [above] {};
  \node[coordinate] at (-0,-2) (G){};
  \node at (G) [below] {};
\drawsimplex{v}{F1}{{F2,F3,F4}}{{F1,F2,F3,F4}}
\draw[black,thick,fill=black] (F1) circle [radius=2pt];
\draw[black,thick,fill=black] (F2) circle [radius=2pt];
\draw[black,thick,fill=black] (F3) circle [radius=2pt];
\draw[black,thick,fill=black] (F4) circle [radius=2pt];
\end{tikzpicture}
}
\newcommand{\drawintroNtwo}[1]{
\begin{tikzpicture}[scale=#1]
  \node[coordinate] at (.5,-1) (F1){};
  \node at (F1) [below right] {$\mathbf{F_1}$};
  \node[coordinate] at (.5,1) (F2){};
  \node at (F2) [above right] {$\mathbf{F_2}$};
  \node[coordinate] at (-.5,.8) (F3){};
  \node at (F3) [above left] {$\mathbf{F_3}$};
  \node[coordinate] at (-.5,-.8) (F4){};
  \node at (F4) [below left] {$\mathbf{F_4}$};
  \node[coordinate] at (-0,2) (B){};
  \node at (B) [above] {};
  \node[coordinate] at (-0,-2) (G){};
  \node at (G) [below] {};
\draw [ultra thick, draw=black]  (F1) -- (F4);
\draw [ultra thick, draw=black]  (F3) -- (F4);
\draw [ultra thick, draw=black]  (F3) -- (F2);
\draw[black,thick,fill=black] (F1) circle [radius=2pt];
\draw[black,thick,fill=black] (F2) circle [radius=2pt];
\draw[black,thick,fill=black] (F3) circle [radius=2pt];
\draw[black,thick,fill=black] (F4) circle [radius=2pt];
\end{tikzpicture}
}
\newcommand{\drawintroNthree}[1]{
\begin{tikzpicture}[scale=#1]
  \node[coordinate] at (.5,-1) (F1){};
  \node at (F1) [below right] {$\mathbf{F_1}$};
  \node[coordinate] at (.5,1) (F2){};
  \node at (F2) [above right] {$\mathbf{F_2}$};
  \node[coordinate] at (-.5,.8) (F3){};
  \node at (F3) [above left] {$\mathbf{F_3}$};
  \node[coordinate] at (-.5,-.8) (F4){};
  \node at (F4) [below left] {$\mathbf{F_4}$};
  \node[coordinate] at (-0,2) (B){};
  \node at (B) [above] {};
  \node[coordinate] at (-0,-2) (G){};
  \node at (G) [below] {};
\draw [ultra thick, draw=black]  (F1) -- (F4);
\draw [ultra thick, draw=black]  (F3) -- (F2);
\draw[black,thick,fill=black] (F1) circle [radius=2pt];
\draw[black,thick,fill=black] (F2) circle [radius=2pt];
\draw[black,thick,fill=black] (F3) circle [radius=2pt];
\draw[black,thick,fill=black] (F4) circle [radius=2pt];
\end{tikzpicture}
}
\newcommand{\drawintroNfour}[1]{
\begin{tikzpicture}[scale=#1]
  \node[coordinate] at (.5,-1) (F1){};
  \node at (F1) [below right] {$\mathbf{F_1}$};
  \node[coordinate] at (.5,1) (F2){};
  \node at (F2) [above right] {$\mathbf{F_2}$};
  \node[coordinate] at (-.5,.8) (F3){};
  \node at (F3) [above left] {$\mathbf{F_3}$};
  \node[coordinate] at (-.5,-.8) (F4){};
  \node at (F4) [below left] {$\mathbf{F_4}$};
  \node[coordinate] at (-0,2) (B){};
  \node at (B) [above] {};
  \node[coordinate] at (-0,-2) (G){};
  \node at (G) [below] {};
\draw[black,thick,fill=black] (F1) circle [radius=2pt];
\draw[black,thick,fill=black] (F2) circle [radius=2pt];
\draw[black,thick,fill=black] (F3) circle [radius=2pt];
\draw[black,thick,fill=black] (F4) circle [radius=2pt];
\end{tikzpicture}
}
\begin{document}
\title[Higher Nerves of Simplicial Complexes]{Higher Nerves of Simplicial Complexes}

\author[Dao]{Hailong Dao}
\email[Hailong Dao]{hdao@ku.edu}
\address{Department of Mathematics\\
University of Kansas\\
Lawrence, KS 66045-7523 USA}
\author[Doolittle]{Joseph Doolittle}
\email[Joseph Doolittle]{jdoolitt@ku.edu}
\address{Department of Mathematics\\
University of Kansas\\
Lawrence, KS 66045-7523 USA}
\author[Duna]{Ken Duna}
\email[Ken Duna]{kduna@ku.edu}
\address{Department of Mathematics\\
University of Kansas\\
Lawrence, KS 66045-7523 USA}
\author[Goeckner]{Bennet Goeckner}
\email[Bennet Goeckner]{goeckner@uw.edu}
\address{Department of Mathematics\\
University of Washington\\
Seattle, WA 98195-4350 USA}
\author[Holmes]{Brent Holmes}
\email[Brent Holmes]{brentjholmes323@gmail.com}
\address{Department of Mathematics\\
University of Kansas\\
Lawrence, KS 66045-7523 USA}
\author[Lyle]{Justin Lyle}
\email[Justin Lyle]{justin.lyle@ku.edu}

\date{\today}

\thanks{2010 {\em Mathematics Subject Classification\/}:05E40, 05E45, 13C15, 13D03}

\keywords{Nerve Complex, $k$-connectivity, homologies, poset, depth, monomial ideals.}

\begin{abstract}
We investigate generalized notions of the nerve complex for the facets of a simplicial complex. We show that the homologies of these higher nerve complexes determine the depth of the Stanley-Reisner ring $k[\Delta]$ as well as the $f$-vector and $h$-vector of $\Delta$.  We present, as an application, a formula for computing regularity of monomial ideals.
\end{abstract}

\maketitle

\section{Introduction}

The Nerve complex has been an important object of study in algebraic combinatorics \cite{Bj03, Gr70, Bo48, Ba03, KM05, LS11, CJ15, PU16}.  We remind the reader of its definition:

Let $A = \{ A_1,A_2,\dots,A_r \}$ be a family of sets.

\begin{defn}
 Consider
\[\N(A) := \{ F \subseteq [r] \colon \cap_{i\in F}A_{i} \neq  \emptyset \}.\]
This simplicial complex is the \textit{Nerve Complex} of $A$.
\end{defn}
Of special interest is the case where $A$ is the set of facets of a simplicial complex $\Delta$; in this case, one sets $N(\Delta):=N(A)$.
We propose a natural extension of this notion.

\begin{defn}\label{highernervedef}
Let $A = \{ A_1,A_2,\dots,A_r \}$ be the set of facets of a simplicial complex $\Delta$.  Define
\[\N_i(\Delta) := \{ F \subseteq [r] \colon |\cap_{j\in F}A_{j}| \geq i \}.\]
We call this simplicial complex the \textit{$i^{th}$ Nerve Complex} of $\Delta$ and we refer to the $N_i(\Delta)$ and the \textit{higher Nerves Complexes} of $\Delta$.
\end{defn}
When $i = 1$, this definition recovers $N(\Delta)$.

The Nerve Theorem of Borsuk \cite{Bo48} gives that $\N(\Delta)$ and $\Delta$ have the same homologies. We now explain how the higher nerves relate to the original complex in a more subtle manner. Namely, their homologies determine important algebraic and combinatorial properties of $\Delta$. We summarize our main quantitative results below.

\begin{theorem}[Main Theorem]\label{LongConj}
Let $k$ be a field, let $\Delta$ be a simplicial complex of dimension $d-1$, and let $k[\Delta]$ be the associated Stanley-Reisner ring.  Let $\tilde{H}_i$ denote $i$th reduced simplicial homology with coefficients in $k$, and let $\chi$ denote Euler characteristic.  Then:
\begin{enumerate}
\item $\ho_i(\N_j(\Delta)) = 0$ for $i +j > d$ and $1\leq j\leq d$ (see Corollary \ref{main1}).
\item $\depth (k[\Delta]) =  \inf \{i+j \colon \tilde H_i(\N_j(\Delta)) \neq 0\}$ (see Theorem \ref{main2}).
\item For $i\geq 0$, $f_i(\Delta) = \displaystyle \sum_{j=i+1}^d \binom{j-1}{i} \chi(\N_{j}(\Delta))$ (see Theorem \ref{fvectorthm}).

\end{enumerate}
\end{theorem}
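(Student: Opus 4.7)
For part~(3) I would begin directly from $\chi(\N_j(\Delta))=\sum_{\emptyset\ne F\in\N_j(\Delta)}(-1)^{|F|-1}$, substitute into the right-hand side, and swap the order of summation. Setting $\sigma_F:=\bigcap_{k\in F}A_k$, the inner sum $\sum_{j=i+1}^{|\sigma_F|}\binom{j-1}{i}$ collapses to $\binom{|\sigma_F|}{i+1}$ by the hockey-stick identity; this counts $i$-dimensional faces $\tau\subseteq\sigma_F$ of $\Delta$, equivalently $i$-faces with $F\subseteq U_\tau:=\{k:\tau\subseteq A_k\}$. Swapping once more and using $\sum_{\emptyset\ne F\subseteq U_\tau}(-1)^{|F|-1}=1$ (valid because each face of $\Delta$ lies in at least one facet) collapses the double sum to $f_i(\Delta)$.

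For parts~(1) and~(2) the engine is the following key lemma, which I would prove first:
\[
H_i\bigl(\N_j(\Delta),\N_{j+1}(\Delta);k\bigr)\;\cong\;\bigoplus_{\substack{\sigma\in\Delta\\ |\sigma|=j}}\tilde H_{i-1}\bigl(\lk_\Delta\sigma;k\bigr).
\]
The idea is that each $F\in\N_j\setminus\N_{j+1}$ determines a unique $j$-face $\sigma_F=\bigcap_{k\in F}A_k$, so the quotient chain complex $C_*(\N_j)/C_*(\N_{j+1})$ decomposes as a direct sum $\bigoplus_\sigma D_\sigma$ indexed by $j$-faces $\sigma$, where $D_\sigma$ is spanned by those $F\subseteq U_\sigma$ satisfying $\bigcap_{k\in F}(A_k\setminus\sigma)=\emptyset$. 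This identifies $D_\sigma$ with the relative chain complex of the pair $\bigl(\Sigma_{U_\sigma},\,\N(\lk_\Delta\sigma)\bigr)$, where $\Sigma_{U_\sigma}$ is the full simplex on $U_\sigma$. Contractibility of $\Sigma_{U_\sigma}$ together with Borsuk's Nerve Theorem then yields $H_i(D_\sigma)\cong\tilde H_{i-1}(\lk_\Delta\sigma)$.

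With the lemma in hand, parts~(1) and~(2) reduce to careful chases of the long exact sequence of $(\N_j,\N_{j+1})$. For~(1), the bound $\dim\lk_\Delta\sigma\le d-1-j$ forces $H_i(\N_j,\N_{j+1})=H_{i+1}(\N_j,\N_{j+1})=0$ whenever $i+j>d$, so the long exact sequence yields $\tilde H_i(\N_j)\cong\tilde H_i(\N_{j+1})$; iterating upward until $\N_j=\{\emptyset\}$ kills $\tilde H_i$ for $i\ge 0$, and the case $i=-1$ is immediate since $\N_j$ has vertices for $j\le d$. For~(2), set $t=\depth k[\Delta]$. The depth formula $t=\min\{|\sigma|+i+1:\sigma\in\Delta,\ \tilde H_i(\lk_\Delta\sigma)\ne 0\}$ combined with the lemma gives $H_i(\N_j,\N_{j+1})=H_{i+1}(\N_j,\N_{j+1})=0$ for $i+j\le t-1$; hence $\tilde H_i(\N_j)\cong\tilde H_i(\N_{j-1})$ in this range, and iterating downward to $\N_1$ while invoking $\tilde H_i(\N_1)\cong\tilde H_i(\Delta)=0$ for $i<t-1$ (the $\sigma=\emptyset$ case of the depth formula) shows $\tilde H_i(\N_j)=0$ whenever $i+j<t$. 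The reverse inequality comes from a minimizing $\sigma$: the nonzero class in $\tilde H_i(\lk_\Delta\sigma)$ produces a nonzero class in $H_{i+1}(\N_{|\sigma|},\N_{|\sigma|+1})$, and exactness lifts this to some $\tilde H_{i'}(\N_{j'})\ne 0$ with $i'+j'=t$.

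I expect the main obstacle to be the key lemma itself, specifically pinning down the direct-sum decomposition of $C_*(\N_j)/C_*(\N_{j+1})$ on the nose and recognizing each summand as the relative chain complex of a contractible simplex modulo its ``link sub-nerve''. Once that is in place the two long-exact-sequence iterations in~(1) and~(2) are formal, though they require careful bookkeeping of the boundary indices (reduced $(-1)$-homology, the role of $\sigma=\emptyset$, and the regime $j>d$).
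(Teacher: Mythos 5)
Your proof of part (3) is the paper's own argument in different clothing: the paper packages the same inclusion--exclusion over the Boolean lattices $\{\rho : \alpha \subseteq \varphi(\rho)\}$ into a generating function in $y$, while you do the double count directly and collapse the inner sum with the hockey-stick identity; both are correct and essentially identical. For parts (1) and (2), however, your route is genuinely different. The paper replaces $\N_{j+1}(\Delta)$ by the order complex $[\Delta]_{>j}$ of the truncated face poset (Proposition \ref{newNerve}) and then runs an induction on $j$ in which the passage from $[\Delta]_{>j-1}$ to $[\Delta]_{>j}$ is controlled by Mayer--Vietoris sequences for stars and anti-stars of the deleted vertices (Lemmas \ref{isolinks}--\ref{independent}); crucially, that machinery only yields an \emph{equivalence of vanishing statements} (Theorem \ref{weaken}), since the Mayer--Vietoris splittings require the vanishing hypotheses at each stage. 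You instead work directly with the filtration $\N_1 \supseteq \N_2 \supseteq \cdots$ and prove the \emph{unconditional} isomorphism $H_i(\N_j,\N_{j+1}) \cong \bigoplus_{|\sigma|=j}\tilde H_{i-1}(\lk_\Delta\sigma)$, which is more information than the paper extracts (it is the $E^1$-page of the spectral sequence of this filtration). Your key lemma is sound: the quotient complex is spanned by the $F$ with $|\bigcap_{k\in F}A_k|=j$ exactly; the boundary preserves the grouping by $\sigma_F$ because $\sigma_F\subseteq\sigma_{F\setminus\{k\}}$ forces equality when both have cardinality $j$; and each summand is the relative chain complex of the full simplex on $U_\sigma$ modulo the nerve of $\lk_\Delta\sigma$ (whose facets are exactly the sets $A_k\setminus\sigma$ for $k\in U_\sigma$), so contractibility plus Borsuk gives $\tilde H_{i-1}(\lk_\Delta\sigma)$. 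Your approach buys an unconditional statement and a clean two-sided argument for (2) (downward injections for the inequality $\geq$, an explicit nonvanishing class for $\leq$), at the cost of the edge-case bookkeeping you already flag ($j=d$, where $\N_{d+1}$ is void; $\sigma$ a facet, where $\lk_\Delta\sigma=\{\varnothing\}$ and $\tilde H_{-1}=k$); the paper's route buys a one-line proof of (1) (a dimension count on $[\Delta]_{>j-1}$) and a reusable statement of independent interest. One index slip to fix in the write-up of (2): the vanishing $H_{i+1}=H_i=0$ you need is for the pair $(\N_{j-1},\N_j)$, and your lemma gives exactly $H_{i+1}(\N_{j-1},\N_j)=H_i(\N_{j-1},\N_j)=0$ when $i+j\le t-1$; as written you state the vanishing for the pair $(\N_j,\N_{j+1})$, for which $H_{i+1}=0$ would require $i+j\le t-2$. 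The argument is correct once the pairs are matched up (indeed only the injectivity of $\tilde H_i(\N_j)\to\tilde H_i(\N_{j-1})$, hence only $H_{i+1}(\N_{j-1},\N_j)=0$, is needed for that direction).
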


In short, the numbers $b_{ij}= \dim \ho_i(\N_j(\Delta))$ for $0\leq i\leq d-j$ and $1\leq j \leq d$ can be presented in a nice table which
determine both the depth and the $f$-vector (and thus also the $h$-vector) of $\Delta$. We provide an explicit example below.

\begin{example}
Consider the simplicial complex $\Delta$ with facets $\{ ABCD, BCDE, DEFG, DFGH \}$.

The following are geometric realizations of the complex and its higher nerves:
\captionsetup[figure]{name=Table}

\begin{figure}[h!]

\begin{tabular}{|c|c|c|c|c|} \hline
$\Delta$ & $\N_1$ & $\N_2$ & $\N_3$ & $\N_4$\\ \hline
\drawintro{1.1}&\drawintroNone{1.1}  &\drawintroNtwo{1.1} &\drawintroNthree{1.1} &\drawintroNfour{1.1}\\ \hline
\end{tabular}
\caption{Nerves of $\Delta$}
\end{figure}

\begin{figure}[h!]
\[\begin{array}{|c|c|c|c|c|c|c|} \hline
\mbox{} & \mbox{} & \mbox{} & \mbox{} & \mbox{}\\[-1em] 
\mbox{} & \tilde{H}_0 & \tilde{H}_1 & \tilde{H}_2 & \chi \\ \hline
\N_1  & 0 & 0 & 0 & 1 \\
\N_2  & 0 & 0 & 0 & 1\\
\N_3  & 1 & 0 & 0 & 2\\
\N_4  & 3 & 0 & 0 & 4\\ \hline

\end{array}\]
\caption{Nerve Homologies}
\end{figure}

\captionsetup[figure]{name=figure}

Using our main theorem and the Table 2, $ \depth k[\Delta] = 3$ and $f(\Delta) = (1, 8, 17, 14, 4 )$.

\end{example}

There are consequences to our main results.  For instance, we provide a formula to compute the regularity of any monomial ideal, not necessarily square-free, in Theorem \ref{reg}. Other algebraic properties such as Serre's condition $(S_r)$ can also be detected from the nerve table (\cite{HL17}).

\begin{remark}

Though we will not consider it in this paper, one can also define higher nerves in a more general setting.  Let $A$ be a collection of subsets of a topological space $X$. Define $N_i(A):=\{F \subseteq [r] \colon \dim \cap_{j\in F}A_{j} \geq i\}$, where $\dim$ represents Krull dimension.  In this setting, special interest is given to the case where $X$ is a Noetherian algebraic scheme; in this case, one sets $N_i(X):=N_i(A)$, where $A$ is the collection of irreducible components of $X$.  In particular, if $X=\Spec R$ for a local ring $R$, then the $N_i(X)$ provide a natural generalization of the Lyubeznik complex of $R$ (see \cite[Theorem 1.1]{Ly07} for the definition).  If, instead, $X=\Spec R$ for $R$ a Stanley-Reisner ring of a simplicial complex $\Delta$, then the complex defined in this remark coincides with that of Definition \ref{highernervedef}, via the Stanley-Reisner correspondence.  Our results in the Stanley-Reisner case raise some intriguing questions about higher nerve complexes of local schemes that can be viewed as extensions of results by Hartshorne and Katzman-Lyubeznik-Zhang (\cite{Ha62,KL16}).  (See the first version of this work, published on the arXiv: https://arxiv.org/pdf/1710.06129v1.pdf.)

\end{remark}

We now briefly describe the structure of our paper. In Section \ref{sec2}, we cover combinatorial background and fix the notation we will use throughout the paper. In Section \ref{sec3}, we recall and prove certain basic facts about depth and connectivity of a complex, which motivate our results and will be used in our proofs.  We provide a strengthened version of the classical Nerve Theorem that suits our purpose in Proposition \ref{newNerve}.  This proposition is a critical component of parts $(1)$ and $(2)$ of our main theorem.  We conclude this section by proving part $(1)$ of our main theorem. In Section \ref{lemmas}, we provide several lemmata, the main technical tools of most of our proofs. Section \ref{main} is devoted to the proof of the second part of our main theorem.  Section \ref{fvector} gives the proof of the third part of our main theorem and provides a formula for the $h$-vector in terms of homologies of higher nerves in Corollary \ref{hvector}. Section \ref{9} applies our main theorem to give a formula for computing the Castelnuovo-Mumford regularity of any monomial ideal.

\section{Notation and Definitions}\label{sec2}

In this section we introduce the notation we will use throughout this paper.  Unless otherwise stated, we fix the field $k$ and let $\tilde{H}_{i}$ denote $i$th reduced simplicial or singular homology, whichever is appropriate, always with coefficients in $k$.

We will use \(V(\Delta)\) to represent the vertex set of a simplicial complex $\Delta$; we will use $V$ instead of $V(\Delta)$ when the choice of $\Delta$ is clear; we also set $n:=|V(\Delta)|$ and $S:=k[x_1,\dots,x_{n}]$. We denote a subcomplex of $\Delta$ induced on the vertex set $W$ as $\Delta|_{W} := \set{F \in \Delta \colon F \subseteq W}$.

Given a subset $T \subseteq V(\Delta)$, we may define the star, the anti-star, and the link of $T$, denoted $\str_{\Delta}(T)$, $\astar_\Delta(T)$, and $\lk_\Delta(T)$, respectively, as follows:
\begin{align*}
\str_\Delta T &:= \set{G \in \Delta \colon T \cup G \in \Delta} \\
\astar_\Delta T &:= \set{G \in \Delta \colon T \cap G = \varnothing} = \Delta|_{V\setminus T} \\
\lk_\Delta T &:= \set{G \in \Delta \colon T \cup G \in \Delta \textrm{ and } T \cap G = \varnothing} = \str_\Delta T \cap \astar_\Delta T
\end{align*}

The star and link of $T$ are the void complex exactly when $T \notin \Delta$, and the link of $T$ is the irrelevant complex $\{\varnothing\}$ exactly when $T$ is a facet.  On the other hand, the anti-star of any $T \subsetneq V(\Delta)$ is nonempty.

We call \(\Delta^{(k)} := \{ \sigma \in \Delta : |\sigma| \le k+1 \}\) the \textit{\(k\)-skeleton of \(\Delta\)}. 

\begin{defn} \label{Fgeq} Let \(\mathcal{F}_{>k}(\Delta)\) denote the face poset of \(\Delta\) restricted to faces of $\Delta$ with cardinality strictly greater than $k$.
\end{defn}

We note the face poset of $\Delta$ is $\mathcal{F}_{>-1}(\Delta)$. Furthermore \(\mathcal{F}_{>d}(\Delta)\) is the empty poset.

\begin{defn} The \textit{order complex} of a poset $P$, denoted $\mathcal{O}(P)$, is the simplicial complex whose faces are all chains in $P$.
\end{defn}

We will denote the geometric realization of $\Delta$ as $|| \Delta ||$.

Given a complex $\Delta$, its \textit{barycentric subdivision} may be defined as $\sd \Delta := \mathcal{O}({\mathcal{F}_{>0}(\Delta)})$. The following is well-known (see Corollary 5.7 of \cite{Gi77} for example).

\begin{lem}\label{basecase}
The realization $||\Delta||$ is homeomorphic to $||\sd \Delta||$. In particular, \(\tilde{H}_i(\Delta) = \tilde{H}_i(\sd \Delta)\) for all $i$.
\end{lem}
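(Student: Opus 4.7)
The plan is to give the standard construction, which is precisely the content of the cited Corollary 5.7 of \cite{Gi77}. For each nonempty face $\tau \in \Delta$, let $b_\tau := \frac{1}{|\tau|}\sum_{v\in\tau} e_v \in ||\Delta|| \subseteq \mathbb{R}^{V(\Delta)}$ denote its geometric barycenter. I would then define a map $\varphi: ||\sd \Delta|| \to ||\Delta||$ by sending the abstract vertex of $\sd \Delta$ corresponding to $\tau \in \mathcal{F}_{>0}(\Delta)$ to the point $b_\tau$, and extending affinely on each simplex of $\sd \Delta$.

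Before $\varphi$ is well defined, one must check that for every chain $\tau_0 \subsetneq \tau_1 \subsetneq \cdots \subsetneq \tau_k$ in $\mathcal{F}_{>0}(\Delta)$ (that is, every face of $\sd \Delta$), the points $b_{\tau_0}, \ldots, b_{\tau_k}$ are affinely independent in $\mathbb{R}^{V(\Delta)}$ and all lie in $||\tau_k||$; this is an elementary computation using the inclusion relations $\tau_0 \subsetneq \cdots \subsetneq \tau_k$. Given this, $\varphi$ is automatically continuous, being piecewise affine on the finitely many simplices of $\sd \Delta$.

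The main step is proving that $\varphi$ is a bijection. I would argue facet by facet: fixing a facet $\sigma$ of $\Delta$, the subcomplex of $\sd \Delta$ consisting of chains ending in a face of $\sigma$ is mapped by $\varphi$ precisely onto the classical barycentric subdivision of the geometric simplex $||\sigma||$, which is a well-known triangulation of $||\sigma||$. Taking the union over all facets of $\Delta$ and noting that the triangulations agree on common subfaces yields bijectivity. Since $\varphi$ restricts to a homeomorphism on each simplex of $\sd \Delta$, and both $||\sd \Delta||$ and $||\Delta||$ carry the CW-topology coming from their simplices, $\varphi$ is a homeomorphism.

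The ``in particular'' statement follows at once: a homeomorphism induces isomorphisms on reduced singular homology, and for a simplicial complex reduced singular homology agrees with reduced simplicial homology. The only genuinely geometric content of the proof is the tiling of $||\sigma||$ by the barycentric subdivision of a geometric simplex, which is the classical input from \cite{Gi77}; if one prefers to avoid the geometric picture, the same conclusion can also be extracted from the standard discussion of barycentric subdivision in singular homology (for example, via the subdivision operator $\mathrm{sd}: C_*(\Delta) \to C_*(\sd \Delta)$ and its chain homotopy inverse). Either way, this lemma serves as a base case and the work is essentially to record the correct reference.
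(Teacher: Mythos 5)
Your proposal is correct and is exactly the standard barycentric-subdivision argument that the paper delegates to its citation (Corollary 5.7 of \cite{Gi77}) rather than proving itself; the affine-independence check, the facet-by-facet identification with the geometric barycentric subdivision, and the passage to homology via topological invariance are all as expected. Nothing further is needed.
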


We let $\rho : \mathcal{F}_{>0}(\Delta) \to V(\sd \Delta)$ be the map which sends an element of $\mathcal{F}_{>0}(\Delta)$ to itself viewed as a vertex of $\sd \Delta$.

We will often use the following shorthand:
\begin{align*}
[\Delta]_{>k} &= \mathcal{O}(\mathcal{F}_{>k}(\Delta)) \\
&= \sd \Delta \big|_{V(\sd \Delta) \setminus V(\sd (\Delta^{(k-1)}))}
\end{align*}

Notice that the image of $\rho$ may be restricted to $V([\Delta]_{>k})$ by restricting its domain to \(\mathcal{F}_{>k}(\Delta)\).
A \textit{simplicial map} $f:\Delta_1 \to \Delta_2$ is a function $f:V(\Delta_1) \to V(\Delta_2)$ so that for all $\sigma \in \Delta_1$, $f(\sigma) \in \Delta_2$.
We say a simplicial map $f$ is a \textit{simplicial isomorphism} if $f$ has an inverse that is a simplicial map. Note that if $f : Q \to P$ is an order-reversing or order-preserving poset map, then $f : \mathcal{O}(Q) \to \mathcal{O}(P)$ is a simplicial map.

Given a simplicial complex $\Delta$, we also consider algebraic properties of its Stanley-Reisner ring. Readers unfamiliar with the algebraic terminology used may see \cite{BH98} or a similar text for more background. Unless otherwise stated, we write $d$ for $\dim k[\Delta]$, the Krull dimension of the ring $k[\Delta]$. We also use $s(\Delta)$ to mean the minimal cardinality of facets of $\Delta$. By $\depth k[\Delta]$ we mean the depth of the $k$-algebra $k[\Delta]$; for a combinatorial characterization of $\depth k[\Delta]$, see Corollary \ref{reisner}. We say that $\Delta$ is Cohen-Macaulay whenever $k[\Delta]$ is Cohen-Macaulay, that is, whenever $\dim k[\Delta]=\depth k[\Delta].$

We further note that

\begin{align*}
\dim k[\Delta] &= \max \set{|F| \colon F \textrm{ is a facet of } \Delta}  \\
\depth k[\Delta] &= \max\{i:\Delta^{(i-1)} \mbox{ is Cohen-Macaulay}\} \le s(\Delta).
\end{align*}

\section{Preparatory Results}\label{sec3}

In this section, we begin by exploring what is known in the literature and use our construction to prove some immediate results.  Many of these results follow as a consequence of our main theorem, but their immediacy shows that our construction is a natural one.  We then prove a generalization of Borsuk's nerve theorem for simplicial complexes.

We now present Hochster's formula, which will be used throughout the paper. It relates the $i^{th}$ local cohomology module of $k[\Delta]$ supported on $\m$, denoted $H^{i}_{\m}(k[\Delta])$, to the reduced homology of links of certain faces of $\Delta$. Here $\m$ is the ideal of $k[\Delta]$ generated by the residue classes of all variables in $S$.

\begin{theorem}[Hochster \cite{BH98}]
Let $\Delta$ be a simplicial complex.  Then the Hilbert series of the local cohomology modules of $k[\Delta]$ with respect to the fine grading is given by:

\[ \Hilb_{H_{\m}^i(k[\Delta])}(t) = \sum_{T \in \Delta} \dim_k \tilde{H}_{i-|T|-1}(\lk_\Delta T) \prod_{v_j \in T} \frac{t_j^{-1}}{1-t_j^{-1}}. \]
\end{theorem}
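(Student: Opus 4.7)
The plan is to compute $H^i_{\m}(k[\Delta])$ via the Cech complex
\[ C^\bullet: \quad 0 \to k[\Delta] \to \bigoplus_{j} k[\Delta]_{x_j} \to \bigoplus_{j<k} k[\Delta]_{x_j x_k} \to \cdots \to k[\Delta]_{x_1 \cdots x_n} \to 0, \]
whose $i$th cohomology is $H^i_\m(k[\Delta])$ because $x_1, \ldots, x_n$ generate $\m$ up to radical. Since $C^\bullet$ is naturally $\mathbb{Z}^n$-graded, I would isolate each multidegree strand $(C^\bullet)_\mathbf{a}$, compute its cohomology, and then repackage the result as a Hilbert series.

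For a strand, the first observation is that $(k[\Delta]_{\prod_{j \in \sigma} x_j})_\mathbf{a}$ is one-dimensional over $k$ exactly when $\{j : a_j < 0\} \subseteq \sigma$ and $\sigma \cup \{j : a_j > 0\} \in \Delta$, and is zero otherwise. If the positive support $P := \{j : a_j > 0\}$ is nonempty, then any $v \in P$ is a cone point for the simplicial complex indexing the nonzero summands of the strand, so $(C^\bullet)_\mathbf{a}$ is acyclic and contributes nothing. Hence only multidegrees $\mathbf{a} \leq 0$ matter.

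For $\mathbf{a} \leq 0$ with $T := \{j : a_j < 0\}$, if $T \notin \Delta$ the strand vanishes; otherwise the substitution $\sigma = T \sqcup U$ identifies the nonzero summands with faces $U \in \lk_\Delta T$, and a direct check shows that the Cech coboundary agrees up to sign with the simplicial coboundary on $\lk_\Delta T$, reindexed with a cohomological-degree shift of $|T| + 1$ (coming from stripping $T$ off $\sigma$ and from passing between augmented and reduced cochains). Since $k$ is a field, reduced simplicial cohomology has the same dimension as reduced simplicial homology in each degree, so $\dim_k H^i_\m(k[\Delta])_\mathbf{a} = \dim_k \tilde{H}_{i-|T|-1}(\lk_\Delta T)$.

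To assemble the Hilbert series, group multidegrees by their support: for fixed $T \in \Delta$, the inner sum $\sum_{\mathbf{a} \leq 0,\ \{j : a_j < 0\} = T} t^\mathbf{a}$ factors as $\prod_{v_j \in T} \sum_{\ell \geq 1} t_j^{-\ell} = \prod_{v_j \in T} \frac{t_j^{-1}}{1 - t_j^{-1}}$, and summing over all faces $T$ yields the stated formula. The main obstacle is the careful matching of the Cech differential with the simplicial coboundary on $\lk_\Delta T$ (signs and degree shifts included), together with the verification that a vertex in the positive support genuinely makes the strand contractible; the rest is a direct Hilbert-series accounting.
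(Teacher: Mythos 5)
The paper does not prove this statement---it is quoted as a known theorem from \cite{BH98}---so there is no internal proof to compare against; your proposal is the standard \v{C}ech-complex proof found in that reference (and in Miller--Sturmfels), and it is correct. The multigraded strand analysis, the cone-point acyclicity argument when the positive support is nonempty, the identification of the strand at $\mathbf{a}\le 0$ with the reduced cochain complex of $\lk_\Delta T$ shifted by $|T|+1$, and the final generating-function bookkeeping giving $\prod_{v_j\in T} t_j^{-1}/(1-t_j^{-1})$ are all exactly the intended argument.
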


One has $\depth k[\Delta] = \min \set{i \colon H_{\m}^i(k[\Delta]) \ne 0}$ and $\dim k[\Delta] = \max \{i \colon H_{\m}^i(k[\Delta]) \ne 0\}$, so Hochster's formula allows us to characterize depth and dimension of $k[\Delta]$ in terms of homologies of links of faces. The following is a generalization of Reisner's well known criterion for Cohen-Macaulayness.

\begin{cor}
\label{reisner}
Let $\Delta$ be a simplicial complex. Then $\depth k[\Delta] \ge t$ if and only if $\ho_{i-1}(\lk_\Delta T)=0$ for all $T \in \Delta$ with $i+|T|<t.$
\end{cor}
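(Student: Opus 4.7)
The plan is to read off the corollary directly from Hochster's formula together with the cohomological characterization of depth. The key observation is that depth is controlled by vanishing of local cohomology, and Hochster's formula translates that vanishing into a statement about reduced homology of links.

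First, I would invoke the characterization $\depth k[\Delta] = \min\{i : H^i_{\m}(k[\Delta]) \neq 0\}$, which is stated immediately after Hochster's formula in the excerpt. Thus $\depth k[\Delta] \geq t$ holds if and only if $H^i_{\m}(k[\Delta]) = 0$ for every $i < t$.

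Next, I would apply Hochster's formula to each such $i$. The Hilbert series is taken with respect to the fine ($\mathbb{Z}^n$) grading, and for each face $T \in \Delta$ the rational function $\prod_{v_j \in T} \frac{t_j^{-1}}{1-t_j^{-1}}$ is supported, after expansion, on monomials whose negative-support is exactly $T$. Because faces $T$ with distinct vertex sets contribute to disjoint multidegrees, the sum in Hochster's formula has no cancellation across different $T$'s. Consequently $H^i_{\m}(k[\Delta]) = 0$ if and only if $\tilde H_{i-|T|-1}(\lk_\Delta T) = 0$ for every $T \in \Delta$.

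Combining the two equivalences, $\depth k[\Delta] \geq t$ is equivalent to the statement that $\tilde H_{i-|T|-1}(\lk_\Delta T) = 0$ for every $T \in \Delta$ and every $i < t$. Reindexing by setting $i' := i - |T|$, the constraint $i < t$ becomes $i' + |T| < t$ and the vanishing reads $\tilde H_{i'-1}(\lk_\Delta T) = 0$; renaming $i'$ to $i$ recovers the statement of the corollary. The only nontrivial point in this plan is the justification that the summands in Hochster's formula contribute to disjoint multidegrees, so that termwise vanishing is equivalent to vanishing of the whole Hilbert series; this is the step I would be most careful to spell out, but it is a standard fine-grading argument and presents no real obstacle.
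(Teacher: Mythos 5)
Your proposal is correct and is exactly the argument the paper intends: the corollary is presented as an immediate consequence of Hochster's formula together with $\depth k[\Delta]=\min\{i : H^i_{\m}(k[\Delta])\neq 0\}$, and the paper gives no further details. Your extra care about the summands occupying disjoint multidegrees (equivalently, that all coefficients are nonnegative dimensions, so no cancellation can occur) is the right point to spell out and is the only substantive step.
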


The following theorem, known as the Borsuk Nerve Theorem, is one of the main tools for working with the classical Nerve Complex.

\begin{theorem}[{\cite[Section 9, Corollary 2]{Bo48}}]\label{classic}
$\Delta$ and $\N_1(\Delta)$ have same homotopy type. In particular, $\ho_i(\Delta) \cong \ho_i(\N_1(\Delta))$ for all $i$.

\end{theorem}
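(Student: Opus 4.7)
The plan is to reduce the statement to a purely combinatorial assertion about poset maps on the face posets of $\Delta$ and $\N_1(\Delta)$. By Lemma \ref{basecase}, it is enough to show that $\sd \Delta$ and $\sd \N_1(\Delta)$ have the same homotopy type, and these are the order complexes of $\mathcal{F}_{>0}(\Delta)$ and $\mathcal{F}_{>0}(\N_1(\Delta))$ respectively. So I would work directly at the level of these two posets.

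Next, I would construct a Galois-type correspondence. Define $f \colon \mathcal{F}_{>0}(\Delta) \to \mathcal{F}_{>0}(\N_1(\Delta))$ by $f(\sigma) := \{ i \in [r] : \sigma \subseteq A_i \}$. This is nonempty since every face of $\Delta$ lies in some facet, and it belongs to $\N_1(\Delta)$ because $\sigma \subseteq \bigcap_{i \in f(\sigma)} A_i$. In the other direction, define $g \colon \mathcal{F}_{>0}(\N_1(\Delta)) \to \mathcal{F}_{>0}(\Delta)$ by $g(F) := \bigcap_{i \in F} A_i$, which is a nonempty face of $\Delta$ precisely because $F \in \N_1(\Delta)$. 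Both $f$ and $g$ are order-reversing, so they induce simplicial maps on the barycentric subdivisions.

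The essential observation is that the compositions are inflationary: $g(f(\sigma)) = \bigcap_{i : \sigma \subseteq A_i} A_i \supseteq \sigma$, and $f(g(F)) = \{ i : \bigcap_{j \in F} A_j \subseteq A_i \} \supseteq F$. Thus, comparing $g \circ f$ and $f \circ g$ with the identity at every element of the corresponding poset, they agree with or lie above the identity. By the standard poset-topology principle (a carrier argument: any two order-preserving maps $h_1, h_2 \colon P \to P$ with $h_1(x) \leq h_2(x)$ for all $x \in P$ induce contiguous, hence homotopic, simplicial maps on $\mathcal{O}(P)$), both compositions are homotopic to the identities. Consequently $f$ and $g$ induce mutually inverse homotopy equivalences between $\sd \Delta$ and $\sd \N_1(\Delta)$, and the homology isomorphism follows.

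The main obstacle to make this fully rigorous is the invocation of the carrier lemma for poset maps: one must check that the straight-line homotopy between $\iota$ and $g \circ f$ stays inside a single simplex of the subdivision, namely the chain $\sigma_1 < \cdots < \sigma_k \leq g(f(\sigma_k))$ in $\mathcal{F}_{>0}(\Delta)$, so that the two simplicial maps are contiguous. Once that contiguity is established, the homotopy equivalence is immediate, and the rest of the argument is formal.
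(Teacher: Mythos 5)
Your route is genuinely different from the paper's. The paper does not prove Theorem \ref{classic} by a Galois correspondence; it deduces it (as the $j=0$ case of Proposition \ref{newNerve}) from Quillen's fiber lemma, Theorem \ref{homEquiv}: one takes only the single order-reversing map $f(\sigma)=\{i : \sigma\subseteq A_i\}$ into $P=\mathcal{F}_{>0}(\N_1(\Delta))$ and checks that each fiber $f^{-1}(P_{\ge\tau})=\bigcap_{i\in\tau}A_i$ is a face, hence contractible. You instead build the adjoint pair $f,g$ and argue that both composites are closure operators, hence homotopic to the identity by the order homotopy lemma. Both are classical proofs of the Nerve Theorem; yours has the advantage of producing an explicit homotopy inverse and avoiding the fiber lemma, while the paper's has the advantage of generalizing immediately to $\mathcal{F}_{>j}(\Delta)$, where your map $g$ no longer lands in the right poset (an intersection of facets containing a large face need not itself be recorded in $[\Delta]_{>j}$'s vertex set in a way that inverts $f$)---which is presumably why the authors chose the fiber-lemma route.

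There is, however, one step in your write-up that fails as stated: the identity and $g\circ f$ are in general \emph{not} contiguous as simplicial maps on $\sd\Delta$, and the straight-line homotopy does \emph{not} stay inside a single simplex. Concretely, let $\Delta$ have facets $A_1=\{1,2,3\}$ and $A_2=\{2,3,4\}$, and take the chain $\{2\}\subsetneq\{1,2\}$ in $\mathcal{F}_{>0}(\Delta)$. Then $g(f(\{2\}))=A_1\cap A_2=\{2,3\}$ and $g(f(\{1,2\}))=\{1,2,3\}$, and the union $\{\{2\},\{1,2\},\{2,3\},\{1,2,3\}\}$ is not a chain since $\{1,2\}$ and $\{2,3\}$ are incomparable; so the union of the two image simplices is not a simplex of $\sd\Delta$. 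The order homotopy lemma you want (two order-preserving maps $h_1\le h_2\colon P\to Q$ induce homotopic maps on order complexes) is nevertheless true and standard (Quillen, Section 1.3; Bj\"orner's order homotopy theorem), but its proof goes through the order-preserving map $P\times\{0<1\}\to Q$ and the fact that $\mathcal{O}(P\times\{0<1\})$ triangulates the prism $||\mathcal{O}(P)||\times[0,1]$, not through contiguity of $h_1$ and $h_2$ themselves. If you replace your contiguity/straight-line argument with a citation of (or a prism proof of) that lemma, the rest of your argument is correct: $f$ and $g$ are well-defined order-reversing maps, both composites dominate the identity, and Lemma \ref{basecase} transfers the resulting homotopy equivalence of barycentric subdivisions back to $\Delta$ and $\N_1(\Delta)$.
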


Note if $\depth {k[\Delta]} \geq t$, then $\ho_{i-1}(\Delta)=\ho_{i-1}(\N_1(\Delta))=0$ for $i<t$ by Corollary \ref{reisner} and Corollary \ref{classic}.


Following from the definition of higher nerves, we are able to quickly derive the following results. 

\begin{lem}\label{conn1}
If $i \leq s(\Delta)$ and $\N_i(\Delta)$ is connected, then $\Delta^{(1)}$ is an $i$-connected graph.
\end{lem}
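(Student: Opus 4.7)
The plan is to verify the Menger-type form of the definition directly: I will show that for every subset $W\subseteq V(\Delta)$ with $|W|\leq i-1$, the graph $\Delta^{(1)}\setminus W$ is connected.

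Fix such a $W$ and two vertices $u,v\in V(\Delta)\setminus W$. Choose facets $F_u,F_v$ of $\Delta$ containing $u$ and $v$ respectively. The hypothesis $i\leq s(\Delta)$ guarantees that $|F|\geq i$ for every facet $F$, so every facet index is a vertex of $\N_i(\Delta)$. Since $\N_i(\Delta)$ is connected, there is a sequence of facets $F_u=G_0,G_1,\dots,G_m=F_v$ with $|G_{j-1}\cap G_j|\geq i$ for each $j$.

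Because $|W|=i-1<i\leq |G_{j-1}\cap G_j|$, I can pick, for each $j\in\{1,\dots,m\}$, a vertex $w_j\in(G_{j-1}\cap G_j)\setminus W$. Each $G_j$ is a face of $\Delta$, so the subgraph of $\Delta^{(1)}$ induced on $G_j\setminus W$ is a clique; in particular $w_j$ and $w_{j+1}$ (both in $G_j\setminus W$) are equal or joined by an edge. The same reasoning inside $F_u$ gives $u$ equal or adjacent to $w_1$, and inside $F_v$ gives $w_m$ equal or adjacent to $v$. Concatenation yields a walk $u,w_1,\dots,w_m,v$ in $\Delta^{(1)}\setminus W$, proving connectedness.

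The argument is essentially a direct unpacking of the definitions, so I do not expect a genuine obstacle. The only subtlety lies in the choice of the intermediate vertices $w_j$, and that is exactly what the two hypotheses buy us: $i\leq s(\Delta)$ makes every facet large enough to survive the removal of $W$, while connectedness of $\N_i(\Delta)$ supplies a chain of facets whose consecutive overlaps are large enough that they cannot be swallowed by $W$ either. Both ingredients are needed: dropping $i\leq s(\Delta)$ would allow facets that do not even index vertices of $\N_i(\Delta)$, while dropping connectedness of $\N_i(\Delta)$ would sever the chain $F_u\to F_v$.
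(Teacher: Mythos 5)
Your proof is correct. It rests on the same key observation as the paper's proof: an edge of $\N_i(\Delta)$ records an intersection of two facets of size at least $i$, which cannot be destroyed by deleting at most $i-1$ vertices, and the hypothesis $i\leq s(\Delta)$ ensures every facet itself survives the deletion. The difference is in how the conclusion is extracted. The paper restricts $\Delta$ to the surviving vertex set $S$, observes that a spanning tree of $\N_i(\Delta)^{(1)}$ descends to a connected subgraph of $\N_1(\Delta|_S)^{(1)}$, and then invokes the Borsuk Nerve Theorem to transfer connectedness from $\N_1(\Delta|_S)$ back to $\Delta|_S$. You instead build an explicit walk in $\Delta^{(1)}\setminus W$ by threading through the cliques induced by the facets $G_j$, which makes the argument entirely elementary and self-contained (no appeal to the Nerve Theorem), at the cost of being slightly longer. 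One cosmetic point: like the paper, you only verify that deleting any $i-1$ vertices leaves a connected graph and do not address the requirement that an $i$-connected graph have more than $i$ vertices; since the paper's own proof ignores this degenerate case as well, it is not a defect of your argument relative to theirs.
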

\begin{proof}
Since $\N_i(\Delta) $ is connected, there is a spanning tree of $\N_i(\Delta)^{(1)}$. Let $S$ be a set of all vertices of $\Delta$ except for at most $i-1$ of them. We have that $\N_1(\Delta|_S)$ is connected, since the facets of $\Delta|_S$ are a subset of the facets of $\Delta$, and the induced spanning tree is preserved. Since connectedness is equivalent to trivial $0^{th}$ reduced homology and $\N_1(-)$ preserves reduced homology, $\Delta|_S$ is connected. Therefore $\Delta^{(1)}$ is $i$-connected.
\end{proof}


\begin{cor}\label{cor_con}
Let $t=\depth k[\Delta]$.  Then $\Delta^{(1)}$ is a $(t-1)$-connected graph.
\end{cor}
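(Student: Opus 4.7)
The plan is to apply Lemma~\ref{conn1} with $i = t - 1$, which will reduce the claim to verifying two conditions: (a) $t - 1 \le s(\Delta)$ and (b) $\N_{t-1}(\Delta)$ is connected. Condition (a) is immediate from the inequality $\depth k[\Delta] \le s(\Delta)$ recorded at the end of Section~\ref{sec2}.

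For (b), I would exploit the formula $\depth k[\Delta] = \max\{i : \Delta^{(i-1)} \text{ is Cohen-Macaulay}\}$, also from Section~\ref{sec2}. This shows that $\Delta^{(t-1)}$ is Cohen-Macaulay, and since $t \le s(\Delta)$ forces every facet of $\Delta$ to have at least $t$ vertices, the facets of $\Delta^{(t-1)}$ are precisely the $t$-subsets of facets of $\Delta$, so $\Delta^{(t-1)}$ is pure of dimension $t - 1$. I would then invoke the classical fact that the dual (facet-ridge) graph of a pure Cohen-Macaulay simplicial complex is connected; this can be deduced from Reisner's criterion (Corollary~\ref{reisner}) by observing that the link of any codimension-two face of such a complex is a $1$-dimensional Cohen-Macaulay complex, hence a connected graph. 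It follows that for any two facets $F, G$ of $\Delta$ and any $t$-subsets $T_F \subseteq F$ and $T_G \subseteq G$, there is a sequence $T_F = T_0, T_1, \dots, T_k = T_G$ of facets of $\Delta^{(t-1)}$ with $|T_j \cap T_{j+1}| \ge t - 1$. Choosing a facet $F_j$ of $\Delta$ containing $T_j$ and setting $F_0 = F$, $F_k = G$, the inclusion $F_j \cap F_{j+1} \supseteq T_j \cap T_{j+1}$ gives $|F_j \cap F_{j+1}| \ge t - 1$, so $F = F_0, F_1, \dots, F_k = G$ is a path in $\N_{t-1}(\Delta)$. Hence $\N_{t-1}(\Delta)$ is connected, and the corollary follows from Lemma~\ref{conn1}.

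The hard part will be invoking the classical connectedness of the dual graph of a Cohen-Macaulay simplicial complex: this is standard in combinatorial commutative algebra but is not developed in the excerpt, so one must either cite a reference or prove it via the short Reisner-based argument mentioned. An alternative route, avoiding this result, is an induction on $|W| \le t - 2$ using Mayer--Vietoris together with the face-link formula $\depth k[\lk_\Delta v] \ge t - 1$, which shows directly that removing any such vertex subset $W$ from $\Delta$ leaves a connected induced subcomplex --- equivalently, that $\Delta^{(1)}$ is $(t-1)$-connected --- thereby bypassing Lemma~\ref{conn1} altogether.
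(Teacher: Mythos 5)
Your proposal is correct and follows essentially the same route as the paper: both show $\Delta^{(t-1)}$ is Cohen--Macaulay, invoke the connectedness of its facet-ridge graph (the paper cites Hartshorne's connectedness theorem for this, where you suggest a Reisner-based argument or a citation), lift the resulting path of $t$-subsets to a path of facets of $\Delta$ in $\N_{t-1}(\Delta)$, and finish with Lemma~\ref{conn1}. Your explicit verification of the hypothesis $t-1 \le s(\Delta)$ is a small point of added care over the paper's write-up, but the argument is the same.
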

\begin{proof}
Since $\Delta^{(t-1)}$ is Cohen-Macaulay, the facet-ridge graph of $\Delta^{(t-1)}$ is connected by \cite{Ha62};  that is, between any pair of $(t-1)$-faces of $\Delta$, there is a sequence of $(t-1)$-faces, so that each consecutive pair intersects in a $(t-2)$-face. Then for any pair of facets of $\Delta$, by choosing a $(t-1)$-face for each, and finding such a sequence between them, we construct from this a sequence of facets so that each consecutive pair intersects in a $(t-2)$-face. Therefore $N_{t-1}(\Delta)$ is connected, and the result then follows from Lemma \ref{conn1}.
\end{proof}

An easy proof of Borsuk's Nerve Theorem (Theorem \ref{classic}) uses the following result.

\begin{theorem}[\cite{Qu78}, Proposition 1.6]\label{homEquiv}
Let $f : \Delta \to \mathcal{O}(P)$ be a simplicial map. If for all $x \in P$ we have that $f^{-1}(P_{\ge x})$ is contractible, then $f$ induces a homotopy equivalence between $\Delta$ and $\mathcal{O}(P)$.
\end{theorem}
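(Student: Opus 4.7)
The plan is to adapt Quillen's classical proof of Theorem A \cite{Qu78} to this simplicial setting by constructing a common ``double mapping cylinder'' complex that sits between $\Delta$ and $\mathcal{O}(P)$, then showing that the two canonical maps from it are both homotopy equivalences. Define $M$ to be the simplicial complex on vertex set $V(\Delta) \sqcup P$ whose simplices are the sets $A \cup C$, where $A \in \Delta$, $C = \{x_0 < \cdots < x_k\}$ is a chain in $P$ (both possibly empty), and $f(v) \geq x_k$ for every $v \in A$. Then $\Delta$ embeds in $M$ as the subcomplex where $C = \emptyset$, and there is a natural simplicial projection $\pi \colon M \to \mathcal{O}(P)$ sending $v \in V(\Delta)$ to $f(v)$ and $x \in P$ to $x$; by construction $\pi$ restricts to $f$ on $\Delta$. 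Thus the theorem reduces to showing that the inclusion $\Delta \hookrightarrow M$ and the projection $\pi$ are both homotopy equivalences.

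For the first claim, I would remove the elements of $P$ from $M$ one at a time in the order of a reverse linear extension. If $x \in P$ is maximal in the remaining subposet, then $\str_M(x)$ is a cone with apex $x$, and a direct calculation yields
\begin{equation*}
\lk_M(x) \;=\; f^{-1}(P_{\geq x}) \,*\, \mathcal{O}(P_{<x}),
\end{equation*}
which is contractible because the first join factor is contractible by hypothesis. A standard gluing argument then shows that deletion of $x$ preserves homotopy type. Crucially, elements removed at earlier steps are never strictly less than $x$ (otherwise they would not have been maximal at their own removal time), so the link calculation in the current complex agrees with $\lk_M(x)$, and the induction goes through. Iterating exhausts $P$ and leaves exactly $\Delta$.

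For the second claim, I would cover $\mathcal{O}(P)$ by the closed stars $\{\str_{\mathcal{O}(P)}(x)\}_{x \in P}$ (a good cover whose nerve is $\mathcal{O}(P)$ itself) and apply the Nerve Lemma to the pullback cover $\{\pi^{-1}(\str_{\mathcal{O}(P)}(x))\}_{x \in P}$ of $M$. Each preimage deformation retracts — via a cone collapse analogous to the one above, but constrained to the preimage — onto the subcomplex $f^{-1}(P_{\geq x}) \subseteq \Delta$, which is contractible by hypothesis. Intersections of finitely many such preimages reduce to a single preimage indexed by the maximum of the corresponding chain in $P$, so intersections are likewise contractible. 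The nerve of the pullback cover therefore matches that of the original star cover, so $M \simeq \mathcal{O}(P)$, and composing with the first homotopy equivalence gives $f \simeq \pi \circ (\Delta \hookrightarrow M)$ as a homotopy equivalence.

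The main obstacle is the second step: verifying that the pullback cover is a good cover — in particular, identifying each $\pi^{-1}(\str_{\mathcal{O}(P)}(x))$ with a deformation retract equivalent to $f^{-1}(P_{\geq x})$, and showing that multi-intersections collapse to the preimage of the maximum of the indexing chain. This is the point at which the combinatorics of $M$ and the hypothesis interact most delicately, and it is the essential content of Quillen's original fiber argument.
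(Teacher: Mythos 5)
The paper offers no proof of this statement---it is imported verbatim from Quillen's Proposition~1.6---so there is no in-paper argument to compare against; I can only judge your proposal on its own merits. Your mapping-cylinder complex $M$ is the right object (it is exactly the homotopy colimit over $P^{\mathrm{op}}$ of the diagram $x \mapsto f^{-1}(P_{\geq x})$ of subcomplex inclusions), and your first step is correct and complete: deleting the elements of $P$ so that maximal elements of the surviving order ideal go first, the link of $x$ in the current complex is exactly $f^{-1}(P_{\geq x}) * \mathcal{O}(P_{<x})$ (everything above $x$ having already been removed), this is contractible because the first join factor is contractible by hypothesis, and the gluing lemma then shows each deletion preserves homotopy type. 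Note that this is the step where the fiber hypothesis is consumed.

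The second step, however, is a genuine gap rather than a delicate verification, and the specific claims you lean on are not correct as stated. First, closed vertex stars do not form a good cover of an arbitrary simplicial complex (they do for the flag complex $\mathcal{O}(P)$, but that needs to be said and used); for the Nerve Lemma you want open stars, or you must verify goodness by hand. Second, the central assertion---that $\pi^{-1}(\str_{\mathcal{O}(P)}(x))$ deformation retracts onto $f^{-1}(P_{\geq x})$---is not backed by any construction, and with open stars it fails already at the level of containment: a vertex $v$ of $\Delta$ with $f(v)$ strictly above $x$ maps to a point outside the open star of $x$, so $f^{-1}(P_{\geq x})$ is not even a subspace of the set it is supposed to be a retract of. With closed stars the preimage is the subcomplex of all $A \cup C$ with $f(A) \cup C \cup \{x\}$ a chain, which also contains every vertex mapping \emph{below} $x$, so the retraction is nontrivial and is essentially equivalent in difficulty to the theorem itself. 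Third, the intersection over a chain $\sigma$ is the (pre)star of the simplex $\sigma$, not the star of $\max\sigma$, so the claimed ``reduction to a single preimage'' is false on the nose and at best a homotopy equivalence that again requires proof. To close the argument within your framework, prove instead that $\mathcal{O}(P) \hookrightarrow M$ is a homotopy equivalence directly---for instance via the acyclic matching pairing $A \cup C$ with $A \cup C \cup \{\min f(A)\}$ on all simplices with $A \neq \varnothing$, whose critical cells are exactly $\mathcal{O}(P)$---or abandon $M$ for the Carrier Lemma applied to the contractible carrier $C \mapsto f^{-1}(P_{\geq \min C})$ from $\mathcal{O}(P)$ to $\Delta$, or cite the Projection and Homotopy Lemmas for homotopy colimits over posets, of which your two steps are precisely the two halves.
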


This theorem also provides a proof of our generalization of the classical Nerve Theorem.  This result is probably known to experts, but we could not find the statement we need, so we provide a proof.

\begin{prop}[Generalized Nerve Theorem]\label{newNerve}

$[\Delta]_{>j}$ is homotopy equivalent to $\N_{j+1}(\Delta)$.
\end{prop}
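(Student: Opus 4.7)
The plan is to apply the Quillen-type result in Theorem \ref{homEquiv}, routed through a barycentric subdivision of the target so that both sides are literal order complexes. Set $P := \mathcal{F}_{>0}(\N_{j+1}(\Delta))$; then $\mathcal{O}(P) = \sd \N_{j+1}(\Delta)$, which by Lemma \ref{basecase} is homeomorphic to $\N_{j+1}(\Delta)$. So it is enough to produce a homotopy equivalence $[\Delta]_{>j} \to \mathcal{O}(P)$ via Theorem \ref{homEquiv}.

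The natural candidate is the vertex map $f \colon \mathcal{F}_{>j}(\Delta) \to P$ defined by
$$ f(\sigma) := \{\, i \in [r] : \sigma \subseteq A_i \,\}, $$
i.e.\ the set of facets containing $\sigma$. I would first check that $f$ lands in $P$: every face lies in some facet, so $f(\sigma) \neq \emptyset$, and $\bigcap_{i \in f(\sigma)} A_i \supseteq \sigma$ has cardinality at least $j+1$, so $f(\sigma) \in \N_{j+1}(\Delta)$. Second, I would note that $f$ is inclusion-reversing: if $\sigma \subseteq \tau$ then $f(\tau) \subseteq f(\sigma)$. Reversing order preserves the property of being a chain, and after collapsing any repeated vertices the $f$-image of a strict chain in $\mathcal{F}_{>j}(\Delta)$ is still a chain in $P$. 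Hence $f$ induces a simplicial map $[\Delta]_{>j} \to \mathcal{O}(P)$, and Theorem \ref{homEquiv} only requires a simplicial map, not an order-preserving one.

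Next I would verify the Quillen hypothesis. Fix $F \in P$, write $I_F := \bigcap_{i \in F} A_i$, and unpack
$$ f^{-1}(P_{\geq F}) = \{\, \sigma \in \mathcal{F}_{>j}(\Delta) : F \subseteq f(\sigma) \,\} = \{\, \sigma : \sigma \subseteq A_i \text{ for all } i \in F \,\} = \{\, \sigma : \sigma \subseteq I_F \,\}. $$
Since $F \in \N_{j+1}(\Delta)$, we have $|I_F| \geq j+1$, so $I_F$ itself belongs to $\mathcal{F}_{>j}(\Delta)$ and is the unique maximum of this subposet. A poset with a maximum has a contractible order complex (its realization is a cone with apex $I_F$), so the contractibility hypothesis of Theorem \ref{homEquiv} is satisfied. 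Applying the theorem and stringing together the equivalences gives $[\Delta]_{>j} \simeq \mathcal{O}(P) \simeq \N_{j+1}(\Delta)$.

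The one subtle point, rather than a genuine obstacle, is choosing the right direction of $f$ and realizing that inclusion-reversal is harmless: chains are chains in either direction, so the simplicial-map hypothesis is met, and then the existence of a natural maximum $I_F$ in every preimage makes the Quillen fibers automatically contractible. Beyond this, the proof is a straightforward bookkeeping computation.
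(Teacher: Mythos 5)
Your proof is correct and follows essentially the same route as the paper: the same order-reversing map $\sigma \mapsto \{\text{facets containing } \sigma\}$ into $\mathcal{F}_{>0}(\N_{j+1}(\Delta))$, the same application of Theorem \ref{homEquiv} via contractibility of the fibers $f^{-1}(P_{\geq F})$, and the same final identification of $\mathcal{O}(P)$ with $\sd \N_{j+1}(\Delta)$. Your verification that each fiber is the subposet of faces contained in $\bigcap_{i\in F} A_i$, hence a cone over its maximum, is in fact slightly more careful than the paper's phrasing.
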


\begin{proof}
We use a similar approach as that of Theorem 10.6 in \cite{Bj95}.

Let $P = \mathcal{F}_{>0}(\N_{j+1}(\Delta))$ and define $f: \mathcal{F}_{>j}(\Delta) \to P$ by
$$
f(\sigma) = \set{F_i : \sigma \subseteq F_i \textrm{ facet of } \Delta}.
$$
This map is order-reversing, and it is well-defined, since $|\sigma| \geq j+1$. Therefore,  $f : \mathcal{O}(\mathcal{F}_{>j}(\Delta)) \to \mathcal{O}(P)$ is a simplicial map. For any $\tau \in P$, we have that
$$
f^{-1}(P_{\ge \tau}) = \bigcap_{F_i \in \tau} F_i,
$$
which is a face of $\Delta$ and is thus contractible. Therefore, by Theorem \ref{homEquiv}, $f$ induces a homotopy equivalence between $\mathcal{O}(\mathcal{F}_{>j}(\Delta))$ and $\mathcal{O}(P)$. Since $\mathcal{O}(P)$ is the barycentric subdivision of $\N_{j+1}(\Delta)$, Lemma \ref{basecase} says that $||\mathcal{O}(P)|| \cong ||\N_{j+1}(\Delta)||$, and therefore, $\mathcal{O}(\mathcal{F}_{>j}(\Delta))=[\Delta]_{>j}$ is homotopy equivalent to $\N_{j+1}(\Delta)$.
\end{proof}

Notice when $j=0$, we recover the classical Nerve Theorem.

We may now prove part (1) of our main theorem as a corollary.

\begin{cor}\label{main1}
For a simplicial complex $\Delta$, $\ho_i(N_j(\Delta)) = 0$ for $i+j>d$ and  $1 \le j \le d$.
\end{cor}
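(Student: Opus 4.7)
The plan is to combine the Generalized Nerve Theorem (Proposition \ref{newNerve}) with a simple dimension count on the order complex $[\Delta]_{>j-1}$. By Proposition \ref{newNerve}, $N_j(\Delta)$ is homotopy equivalent to $[\Delta]_{>j-1} = \mathcal{O}(\mathcal{F}_{>j-1}(\Delta))$, so their reduced homologies agree in every degree. Thus it suffices to show $\tilde{H}_i([\Delta]_{>j-1}) = 0$ whenever $i + j > d$.

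The key observation is that $[\Delta]_{>j-1}$, as the order complex of the poset $\mathcal{F}_{>j-1}(\Delta)$, is low-dimensional. Its faces are chains $F_1 \subsetneq F_2 \subsetneq \cdots \subsetneq F_k$ of faces of $\Delta$ with $|F_\ell| \geq j$ for each $\ell$. Since $\dim \Delta = d-1$, every face has cardinality at most $d$, so the cardinalities satisfy $j \leq |F_1| < |F_2| < \cdots < |F_k| \leq d$, forcing $k \leq d - j + 1$. Hence $\dim [\Delta]_{>j-1} \leq d - j$.

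Any simplicial complex of dimension at most $n$ has vanishing reduced homology in all degrees $i > n$. Applying this with $n = d - j$ yields $\tilde{H}_i([\Delta]_{>j-1}) = 0$ for every $i > d - j$, i.e., for every $i, j$ with $i + j > d$ and $1 \leq j \leq d$. Transferring back via the homotopy equivalence of Proposition \ref{newNerve} gives $\tilde{H}_i(N_j(\Delta)) = 0$ in the same range, as required.

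There is no real obstacle here: the proof is essentially a one-line consequence of the Generalized Nerve Theorem together with the bound on the length of chains in $\mathcal{F}_{>j-1}(\Delta)$. The only thing to be careful about is the indexing convention ($j$ versus $j-1$ in $[\Delta]_{>j-1}$) and confirming that the maximum chain length uses the dimension $d$ of $k[\Delta]$ rather than the dimension $d-1$ of $\Delta$ as a complex.
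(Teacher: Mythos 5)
Your proof is correct and follows exactly the paper's argument: apply Proposition \ref{newNerve} to reduce to $[\Delta]_{>j-1}$, then observe that chains in $\mathcal{F}_{>j-1}(\Delta)$ have length at most $d-j+1$, so the order complex has dimension at most $d-j$ and its reduced homology vanishes above that degree. The only difference is that you spell out the chain-length count that the paper leaves implicit.
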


\begin{proof}
By Proposition \ref{newNerve}, we get

\[\ho_i(N_j(\Delta)) = \ho_i([\Delta]_{>j-1}). \]

But $[\Delta]_{>j-1}$ has dimension at most $d-j$ and the result follows.  
\end{proof}

\section{Lemmata}\label{lemmas}
In this section, we introduce several lemmata that will be integral to proving our main theorem. We refer to Section \ref{sec2} for notation.

\begin{lem}\label{isolinks}

Let \(T\) be a face of $\Delta$ and $|T|=k>0$. Then, $\lk_{[\Delta]_{>k-1}}(\rho(T)) \cong [\lk_{\Delta}(T)]_{>0}$ as simplicial complexes.  In particular, $  \ho_{i}(\lk_{[\Delta]_{>k-1}} (\rho(T))) \cong \ho_{i}(\lk_{\Delta}(T))$ for every $i$.

\end{lem}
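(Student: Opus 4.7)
The plan is to present both complexes as order complexes of naturally isomorphic posets, and then invoke Lemma \ref{basecase} to deduce the homology statement from the simplicial isomorphism.

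First I would unpack the link on the left-hand side. A face of $[\Delta]_{>k-1}$ is a chain $\sigma$ in $\mathcal{F}_{>k-1}(\Delta)$, i.e., a totally ordered collection of faces of $\Delta$ each having cardinality $\geq k$. Such a chain lies in $\lk_{[\Delta]_{>k-1}}(\rho(T))$ exactly when $\rho(T) \notin \sigma$ and $\sigma \cup \{\rho(T)\}$ is again a chain. Translating back through $\rho$, this says every element of $\sigma$ is comparable to $T$ in $\mathcal{F}_{>k-1}(\Delta)$ and no element of $\sigma$ equals $T$. Since every member of $\sigma$ has cardinality at least $|T|=k$ and is comparable to $T$, the only way to be comparable is to contain $T$; combined with the condition $T \notin \sigma$, each such member must strictly contain $T$. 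Thus $\lk_{[\Delta]_{>k-1}}(\rho(T))$ is exactly $\mathcal{O}(P_T)$, where $P_T \subseteq \mathcal{F}_{>k-1}(\Delta)$ is the subposet of faces of $\Delta$ strictly containing $T$.

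Next I would define $\phi : P_T \to \mathcal{F}_{>0}(\lk_\Delta T)$ by $\phi(F) = F \setminus T$. Since $F$ strictly contains $T$ and $F$ is a face of $\Delta$, the set $F \setminus T$ is a nonempty face of $\lk_\Delta T$, so $\phi$ is well-defined. The map $G \mapsto G \cup T$ gives an inverse on $\mathcal{F}_{>0}(\lk_\Delta T)$, and both maps are order preserving, so $\phi$ is an isomorphism of posets. An order-preserving poset isomorphism induces a simplicial isomorphism of the associated order complexes, so
\[
\lk_{[\Delta]_{>k-1}}(\rho(T)) \;=\; \mathcal{O}(P_T) \;\cong\; \mathcal{O}(\mathcal{F}_{>0}(\lk_\Delta T)) \;=\; [\lk_\Delta T]_{>0},
\]
which is the first claim. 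For the homology statement, observe that by definition $[\lk_\Delta T]_{>0} = \sd(\lk_\Delta T)$, and Lemma \ref{basecase} yields $\tilde H_i([\lk_\Delta T]_{>0}) = \tilde H_i(\lk_\Delta T)$ for every $i$.

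The content here is essentially definitional bookkeeping, so there is no serious technical obstacle; the only point requiring care is the verification in the first step that membership in the link forces strict containment of $T$ (as opposed to mere comparability), which uses the cardinality lower bound built into $\mathcal{F}_{>k-1}(\Delta)$. The corner case in which $T$ is a facet is handled uniformly: $P_T$ is then empty, $\lk_\Delta T = \{\varnothing\}$, and both complexes are the empty complex, so the isomorphism and homology equality hold vacuously.
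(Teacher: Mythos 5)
Your proof is correct and takes essentially the same route as the paper: the paper defines the vertex map $\rho(\tau)\mapsto\rho(\tau\cup T)$ and asserts it is a simplicial isomorphism, which is exactly your poset isomorphism $F\mapsto F\setminus T$ read in reverse, and you simply organize (and actually carry out) the verification at the level of posets and order complexes before invoking Lemma \ref{basecase}. One pedantic note: in the facet case both sides are the irrelevant complex $\{\varnothing\}$ rather than the void complex, but this does not affect your conclusion.
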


\begin{proof}

First note that if $T$ is a facet then $\lk_{\Delta}(T)=\{\varnothing\}=[\lk_{\Delta}(T)]_{>0}$.  But, since $T$ is a facet, $\{\rho(T)\}$ must be a facet of $[\Delta]_{>k-1}$, since this is a chain of maximal length containing $\rho(T)$.  Thus $\lk_{[\Delta]_{>k-1}}(\rho(T))=\{\varnothing\}=[\lk_{\Delta}(T)]_{>0}$, and thus we have the result if $T$ is a facet.

Now, suppose $T \in \Delta$ is not a facet and define $f:V([\lk(T)]_{>0}) \to V(\lk_{[\Delta]_{>k-1}}(\rho(T)))$ by $f(\rho(\tau))=\rho(\tau \cup T)$. One can check that $f$ is a simplicial isomorphism.

Then $f$ induces a homeomorphism between the geometric realizations of $[\lk_{\Delta}(T)]_{>0}$ and \linebreak $\lk_{[\Delta]_{k-1}}(\rho(T))$, and the result follows from Lemma \ref{basecase}.  

\end{proof}

\begin{lem}\label{vertexmayer}

Suppose $b$ is a non-isolated vertex of $\Delta$.  Then there is a Mayer-Vietoris exact sequence of the form 
\[ \cdots \to \ho_{i}(\Delta) \to \ho_{i-1}(\lk_\Delta(b)) \to \ho_{i-1}(\astar_\Delta(b)) \to \ho_{i-1}(\Delta) \to \cdots \]

\end{lem}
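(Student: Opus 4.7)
The plan is to apply the standard simplicial Mayer--Vietoris sequence to the covering of $\Delta$ by the two subcomplexes $\str_\Delta(b)$ and $\astar_\Delta(b)$. First I would verify the three set-theoretic facts one needs to input into Mayer--Vietoris: that $\str_\Delta(b)$ and $\astar_\Delta(b)$ are subcomplexes (both are clearly closed under taking subsets from the definitions in Section \ref{sec2}), that $\str_\Delta(b) \cup \astar_\Delta(b) = \Delta$ (any face of $\Delta$ either contains $b$, in which case $G \cup \{b\} = G \in \Delta$ puts it in the star, or it avoids $b$ and lies in the anti-star), and that $\str_\Delta(b) \cap \astar_\Delta(b) = \lk_\Delta(b)$, which is just the last line of the definitions.

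Next I would observe that $\str_\Delta(b)$ is contractible. Indeed, $b$ is a non-isolated vertex and $\str_\Delta(b)$ is a simplicial cone with apex $b$: for any $G \in \str_\Delta(b)$, one has $G \cup \{b\} \in \Delta$, so $G \cup \{b\} \in \str_\Delta(b)$ as well. A cone is contractible, so $\ho_i(\str_\Delta(b)) = 0$ for all $i$.

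With these ingredients in place, the reduced Mayer--Vietoris sequence of the decomposition $\Delta = \str_\Delta(b) \cup \astar_\Delta(b)$ reads
\[ \cdots \to \ho_i(\lk_\Delta(b)) \to \ho_i(\str_\Delta(b)) \oplus \ho_i(\astar_\Delta(b)) \to \ho_i(\Delta) \to \ho_{i-1}(\lk_\Delta(b)) \to \cdots, \]
and after inserting $\ho_i(\str_\Delta(b)) = 0$ for all $i$ this collapses to exactly the sequence claimed in the lemma.

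There is no real obstacle here; the only subtle point is the non-isolation hypothesis on $b$, which is needed to ensure that $\lk_\Delta(b)$ is non-empty (since some edge $\{b,c\}$ lies in $\Delta$, so $\{c\} \in \lk_\Delta(b)$), which is the hypothesis required for the \emph{reduced} version of Mayer--Vietoris to apply without a degenerate term. Everything else is a direct verification from the definitions given in Section \ref{sec2}.
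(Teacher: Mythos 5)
Your proposal is correct and follows exactly the same route as the paper's proof: decompose $\Delta$ as $\str_\Delta(b)\cup\astar_\Delta(b)$ with intersection $\lk_\Delta(b)$, note that non-isolation of $b$ makes the link nonempty so the reduced Mayer--Vietoris sequence applies, and delete the acyclic cone $\str_\Delta(b)$ from the sequence. Your write-up is somewhat more detailed than the paper's (which leaves the union/intersection verification and the cone observation as one-line remarks), but there is no substantive difference.
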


\begin{proof}

Notice that $\str_{\Delta}(b) \cup \astar_\Delta(b) = \Delta$ and $\str(b) \cap \astar_\Delta(b)=\lk_\Delta(b)$.  Since $b$ is non-isolated, $\lk_{\Delta}(b)$ is nonempty. Thus we have a Mayer-Vietoris exact sequence in reduced homology:

\[ \cdots \to \ho_{i}(\Delta) \to \ho_{i-1}(\lk_\Delta(b)) \to \ho_{i-1}(\str_{\Delta}(b)) \oplus \ho_{i-1}(\astar_\Delta(b)) \to \ho_{i-1}(\Delta) \to \cdots \]

Since $\str_{\Delta}(T)$ is a cone, it is acyclic, and the result follows.

\end{proof}

\begin{lem}\label{claim1}
Let T be a non-trivial, non-facet face of $\Delta$ with $|T|=k$. Let $i$ be such that $\ho_{i}([\Delta]_{>k-1})=\ho_{i-1}([\Delta]_{>k-1})=0$.  Then
\[\ho_{i-1}(\lk_\Delta(T)) \cong \ho_{i-1}(\astar_{[\Delta]_{>k-1}}(\rho(T))).\]
\end{lem}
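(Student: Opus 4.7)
The plan is to apply the Mayer--Vietoris sequence of Lemma \ref{vertexmayer} to the complex $[\Delta]_{>k-1}$ at the vertex $b = \rho(T)$, and then identify the resulting link with $\lk_\Delta(T)$ via Lemma \ref{isolinks}.

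First I would verify that $\rho(T)$ is a non-isolated vertex of $[\Delta]_{>k-1}$. Since $T$ is a face of $\Delta$ with $|T| = k \ge 1$, we have $\rho(T) \in V([\Delta]_{>k-1})$. Since $T$ is not a facet, there exists some $T' \in \Delta$ with $T \subsetneq T'$, so $|T'| > k$, and hence $\{\rho(T), \rho(T')\}$ is a chain in $\mathcal{F}_{>k-1}(\Delta)$, i.e., an edge of $[\Delta]_{>k-1}$. This edge witnesses that $\rho(T)$ is non-isolated.

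Next, Lemma \ref{vertexmayer} applied to $[\Delta]_{>k-1}$ at $b = \rho(T)$ yields the exact sequence
\[
\cdots \to \ho_i([\Delta]_{>k-1}) \to \ho_{i-1}\bigl(\lk_{[\Delta]_{>k-1}}(\rho(T))\bigr) \to \ho_{i-1}\bigl(\astar_{[\Delta]_{>k-1}}(\rho(T))\bigr) \to \ho_{i-1}([\Delta]_{>k-1}) \to \cdots.
\]
By hypothesis, the outer two terms vanish, so the middle map is an isomorphism
\[
\ho_{i-1}\bigl(\lk_{[\Delta]_{>k-1}}(\rho(T))\bigr) \;\cong\; \ho_{i-1}\bigl(\astar_{[\Delta]_{>k-1}}(\rho(T))\bigr).
\]

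Finally, Lemma \ref{isolinks} directly identifies $\ho_{i-1}\bigl(\lk_{[\Delta]_{>k-1}}(\rho(T))\bigr)$ with $\ho_{i-1}(\lk_\Delta(T))$, which gives the desired isomorphism. The proof is essentially a bookkeeping argument; the only genuine verification is that $\rho(T)$ is non-isolated (so Lemma \ref{vertexmayer} applies), which is exactly why we need $T$ to be both non-trivial ($k \ge 1$, ensuring $\rho(T)$ lives in $[\Delta]_{>k-1}$) and a non-facet ($T$ is properly contained in some face, giving a neighbor). I do not anticipate a significant obstacle beyond these hypotheses being explicitly used at the right places.
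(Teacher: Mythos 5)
Your proposal is correct and follows exactly the same route as the paper's proof: show $\rho(T)$ is non-isolated because $T$ is a non-facet, apply the Mayer--Vietoris sequence of Lemma \ref{vertexmayer} to $[\Delta]_{>k-1}$ at $\rho(T)$, use the vanishing hypotheses to extract the isomorphism, and finish with Lemma \ref{isolinks}. Your explicit verification that $\rho(T)$ has a neighbor is a small elaboration of a step the paper states without detail, but the argument is the same.
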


\begin{proof}

Since $T$ is not a facet, $\rho(T)$ is not an isolated vertex of $[\Delta]_{>k-1}$.  Thus, Lemma \ref{vertexmayer} gives an exact sequence

\[ \cdots \to \ho_{i}([\Delta]_{>k-1}) \to \ho_{i-1}(\lk_{[\Delta]_{>k-1}}(\rho(T))) \to \ho_{i-1}(\astar_{[\Delta]_{>k-1}}(\rho(T))) \to \ho_{i-1}([\Delta]_{>k-1}) \to \cdots \]

Since $\ho_{i}([\Delta]_{>k-1})=\ho_{i-1}([\Delta]_{>k-1})=0$, we have \(\ho_{i-1}(\astar_{[\Delta]_{>k-1}}(\rho(T))) \) \( \cong \ho_{i-1}(\lk_{[\Delta]_{>k-1}}(\rho(T)))\).

By Lemma \ref{isolinks}, we have $\ho_{i-1}(\lk_{[\Delta]_{>k-1}}(\rho(T))) \cong \ho_{i-1}(\lk_{\Delta}(T))$ which gives the result.
\end{proof}

\begin{lem}\label{independent}
Let $\Delta$ be a simplicial complex and $J \subsetneq V = V(\Delta)$ such that $\dim(\Delta|_J) = 0$. Assume that $\ho_{i-1}(\Delta) = \ho_{i}(\Delta) = 0$. Then $$\ho_{i-1}(\Delta|_{V\setminus J}) \cong \bigoplus_{x\in J} \ho_{i-1}(\Delta|_{V\setminus \{x\}}).$$
\end{lem}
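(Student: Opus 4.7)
The plan is to extract $\ho_{i-1}(\Delta|_{V\setminus J})$ from the long exact sequence of the pair $(\Delta,\Delta|_{V\setminus J})$ and then to decompose the resulting relative homology into a direct sum by exploiting the independence of $J$. The hypothesis $\ho_{i-1}(\Delta)=\ho_{i}(\Delta)=0$ immediately collapses the long exact sequence of this pair into an isomorphism
\[
\ho_{i-1}(\Delta|_{V\setminus J})\;\cong\;H_i(\Delta,\Delta|_{V\setminus J}).
\]

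Next, I would exploit the condition $\dim(\Delta|_J)=0$: no two vertices of $J$ span an edge in $\Delta$, so every face of $\Delta$ that is not a face of $\Delta|_{V\setminus J}$ meets $J$ in exactly one vertex. This partitions the relative chain complex as a direct sum
\[
C_*(\Delta,\Delta|_{V\setminus J})\;=\;\bigoplus_{x\in J}C_*^{\,x},
\]
where $C_*^{\,x}$ is spanned by the faces of $\Delta$ containing $x$. To see that each $C_*^{\,x}$ is closed under the boundary operator, observe that if $F\ni x$ and $v\in F$, then $F\setminus\{v\}$ either still contains $x$ (so it remains in $C_*^{\,x}$) or equals $F\setminus\{x\}\in\Delta|_{V\setminus J}$ (and is therefore zero in the quotient). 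Unwinding definitions identifies $C_*^{\,x}$ with $C_*(\str_\Delta(x),\lk_\Delta(x))$, and since $\str_\Delta(x)$ is a contractible cone, the long exact sequence of that pair yields $H_i(C_*^{\,x})\cong\ho_{i-1}(\lk_\Delta(x))$.

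To finish, I would apply Lemma \ref{vertexmayer} at each vertex $b=x\in J$: the segment
\[
\ho_{i}(\Delta)\;\to\;\ho_{i-1}(\lk_\Delta(x))\;\to\;\ho_{i-1}(\Delta|_{V\setminus\{x\}})\;\to\;\ho_{i-1}(\Delta),
\]
combined with the vanishing hypothesis, gives $\ho_{i-1}(\lk_\Delta(x))\cong\ho_{i-1}(\Delta|_{V\setminus\{x\}})$. Composing all the isomorphisms produces the claimed decomposition.

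The step I expect to be the most delicate is establishing the direct-sum decomposition of the relative chain complex and identifying each summand with $C_*(\str_\Delta(x),\lk_\Delta(x))$; everything else is a mechanical application of long exact sequences. The observation driving both points is that, by independence of $J$, each face of $\Delta$ not already in $\Delta|_{V\setminus J}$ contains exactly one vertex of $J$, so the incidences to $J$ cleanly organize the boundary map. A minor subtlety is that Lemma \ref{vertexmayer} needs $x$ to be non-isolated in $\Delta$; for indices where the statement is non-trivial this is forced by the homological hypothesis, and the remaining degenerate cases are vacuous.
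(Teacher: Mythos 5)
Your argument is correct, but it is genuinely different from the one in the paper. The paper inducts on $|J|$: writing $J = J' \cup \{x\}$, it observes that $\dim(\Delta|_J)=0$ forces $\Delta = \Delta|_{V\setminus J'} \cup \Delta|_{V\setminus\{x\}}$ with intersection $\Delta|_{V\setminus J}$, and then a single Mayer--Vietoris sequence (with the two vanishing hypotheses killing the end terms) peels off one summand per step. You instead work non-inductively with the pair $(\Delta,\Delta|_{V\setminus J})$, identify $\ho_{i-1}(\Delta|_{V\setminus J})$ with $H_i(\Delta,\Delta|_{V\setminus J})$, and use the fact that every face outside $\Delta|_{V\setminus J}$ meets $J$ in exactly one vertex to split the relative chain complex into blocks $C_*^{\,x}\cong C_*(\str_\Delta(x),\lk_\Delta(x))$, each contributing $\ho_{i-1}(\lk_\Delta(x))$, before converting links to antistars via Lemma \ref{vertexmayer}. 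Your route is longer but more structural: it exposes the intermediate local-to-global isomorphism $H_i(\Delta,\Delta|_{V\setminus J})\cong\bigoplus_{x\in J}\ho_{i-1}(\lk_\Delta(x))$ and makes transparent exactly where the independence of $J$ enters (the block-diagonality of the boundary), whereas the paper's induction is shorter and uses only the same Mayer--Vietoris tool already set up for Lemma \ref{claim1}. Two small points to tighten: the reduced long exact sequence of the pair needs $\Delta|_{V\setminus J}\neq\varnothing$, which holds because $J\subsetneq V$; and your treatment of isolated $x\in J$ is a bit terse --- for $i\le 1$ connectedness of $\Delta$ rules such $x$ out (except in the trivial one-vertex case), while for $i\ge 2$ both $\ho_{i-1}(\lk_\Delta(x))$ and $\ho_{i-1}(\Delta|_{V\setminus\{x\}})$ vanish (the latter since $\ho_{i-1}(\Delta)=0$ and $x$ splits off as a discrete component), so the isomorphism holds vacuously; spelling this out would make the argument airtight.
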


\begin{proof}
We will proceed by induction on $|J|$. When $|J| = 1$, the result is immediate. Suppose the result holds for any $J$ of cardinality $k$ for some $k\geq 1$, and suppose now that $|J| = k+1$. Let $x \in J$ and $J' = J \setminus \{x\}$. Suppose $\sigma \in \Delta$. If $x \in \sigma$, then $\sigma \in \Delta|_{V\setminus J'}$; otherwise if $\sigma$ contained some $y \in J'$, then $\{x,y\} \in \Delta$, contradicting the fact that $\dim(\Delta|_J) = 0$. If $x \notin \sigma$, then $\sigma \in \Delta|_{V\setminus \{x\}}$. Therefore, $\Delta = \Delta|_{V \setminus J'} \cup \Delta|_{V\setminus \{x\}}$. Note that $\Delta|_{V \setminus J'} \cap \Delta|_{V\setminus \{x\}} = \Delta|_{V \setminus J} \neq \varnothing.$

We have the following Mayer-Vietoris sequence in reduced homology:

$$ \cdots \to \ho_{i}(\Delta) \to \ho_{i-1}(\Delta|_{V\setminus J}) \to  \ho_{i-1}(\Delta|_{V \setminus J'}) \oplus \ho_{i-1} (\Delta|_{V\setminus \{x\}}) \to \ho_{i-1}(\Delta) \to \cdots$$

Because $\ho_{i-1}(\Delta) = \ho_{i}(\Delta) = 0$, we have that $$\ho_{i-1}(\Delta|_{V\setminus J}) \cong \ho_{i-1}(\Delta|_{V \setminus J'}) \oplus \ho_{i-1} (\Delta|_{V\setminus \{x\}}).$$ By induction, $\ho_{i-1}(\Delta|_{V \setminus J'}) \cong \bigoplus_{y \in J'} \ho_{i-1} (\Delta|_{V\setminus \{y\}}).$ Therefore $$\ho_{i-1}(\Delta|_{V \setminus J}) \cong \bigoplus_{x \in J} \ho_{i-1} (\Delta|_{V\setminus \{x\}}).$$
\end{proof}

\section{Depth and higher nerves}\label{main}
\begin{theorem}\label{weaken}

For a fixed $m$, the following are equivalent:
\begin{enumerate}
\item $\ho_{i-1}(\N_{j+1}(\Delta)) = 0$ for all $i,j \geq 0$ such that $i+j < m .$
\item $\ho_{i-1}(\lk_{\Delta}(T)) = 0 $ for all $i,j \geq 0,$ $|T| = j,$ and $ i+j < m .$
\end{enumerate}
\end{theorem}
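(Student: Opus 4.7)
Both implications will follow from the same key identity, obtained by applying Lemma~\ref{independent} to the complex $X = [\Delta]_{>j-1}$ with the independent set $J_j := \{\rho(F) : F \in \Delta,\ |F|=j\}$. The set $J_j$ is indeed independent in $X$, since no chain in the face poset can contain two distinct faces of equal cardinality; moreover $X|_{V(X) \setminus J_j} = [\Delta]_{>j}$ and $X|_{V(X) \setminus \{\rho(T)\}} = \astar_X(\rho(T))$ for each $T \in J_j$. Whenever the hypotheses $\ho_{i-1}(X) = \ho_i(X) = 0$ are in force, Lemma~\ref{independent} yields
\[
\ho_{i-1}([\Delta]_{>j}) \;\cong\; \bigoplus_{T \in \Delta,\, |T| = j} \ho_{i-1}\bigl(\astar_{[\Delta]_{>j-1}}(\rho(T))\bigr),
\]
and Lemma~\ref{claim1} further identifies each summand with $\ho_{i-1}(\lk_\Delta T)$ as long as $T$ is a non-facet. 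Proposition~\ref{newNerve} is used throughout to pass between $\ho_{i-1}(\N_{j+1}(\Delta))$ and $\ho_{i-1}([\Delta]_{>j})$.

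For the direction $(2) \Rightarrow (1)$, I would induct on $j$ to prove $\ho_{i-1}([\Delta]_{>j}) = 0$ for all $i + j < m$. The base case $j = 0$ is immediate from Lemma~\ref{basecase} combined with $\lk_\Delta(\emptyset) = \Delta$. For the inductive step, assumption (2) with $i = 0$ and $|T| = j < m$ forces every size-$j$ face to be a non-facet (so that Lemma~\ref{claim1} applies); the inductive hypothesis on $[\Delta]_{>j-1}$ supplies the two degree-vanishings required by Lemma~\ref{independent}; and (2) itself gives $\ho_{i-1}(\lk_\Delta T) = 0$. Each summand of the displayed identity then vanishes, giving $\ho_{i-1}([\Delta]_{>j}) = 0$.

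For $(1) \Rightarrow (2)$, no further induction on $j$ is needed once I verify that $s(\Delta) \geq m$ directly from (1). Indeed, if $T$ were a facet with $|T| = j < m$, then $\rho(T)$ would be isolated in $[\Delta]_{>j-1}$; yet (1) forces $\ho_0([\Delta]_{>j-1}) = 0$, so $\rho(T)$ must in fact be the only vertex of $[\Delta]_{>j-1}$, making $d = j$; however (1) also gives $\ho_{-1}([\Delta]_{>m-1}) = 0$, implying $d \geq m$, a contradiction. Once each size-$j$ face ($j < m$) is thus a non-facet, the hypotheses of Lemma~\ref{independent} hold directly by (1) for $i + j < m$, the left-hand side of the displayed identity vanishes by (1), and so each summand vanishes; Lemma~\ref{claim1} then converts this to $\ho_{i-1}(\lk_\Delta T) = 0$, as required. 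The case $j = 0$ is immediate from $\lk_\Delta(\emptyset) = \Delta$ combined with Lemma~\ref{basecase} and Proposition~\ref{newNerve}.

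The most delicate point is verifying the non-facet hypothesis of Lemma~\ref{claim1}: this is the unique step where the two directions require genuinely different arguments, with (2) supplying non-facetness directly through the $i = 0$ case and (1) requiring the short connectedness-plus-nonemptiness argument above. A secondary subtlety is aligning the index bounds so that the (inductive or direct) vanishings on $[\Delta]_{>j-1}$ cover precisely the two degrees $i-1$ and $i$ demanded by Lemma~\ref{independent}, which works because the displayed identity shifts homological degree by exactly one.
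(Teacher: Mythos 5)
Your proposal is correct and follows essentially the same route as the paper: the same chain of Lemma~\ref{independent}, Lemma~\ref{claim1}, and Proposition~\ref{newNerve} yields the identity $\ho_{i-1}(\N_{j+1}(\Delta)) \cong \bigoplus_{|T|=j} \ho_{i-1}(\lk_\Delta T)$, and the facet obstruction is dispatched by the same connectedness-plus-nonemptiness observation. The only difference is organizational — the paper runs a single induction on $j$ establishing the isomorphism under either hypothesis, whereas you split the two implications and note that $(1)\Rightarrow(2)$ needs no induction — but the substance is identical.
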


\begin{proof}
We begin the proof by showing that each condition implies $m \leq s(\Delta)$ and thus we will never need to consider the case when $T$ is a facet.  Consider the first condition: if $m > s(\Delta)$, then we may take $j= s(\Delta)-1, i=1$.  This nerve will have an isolated vertex corresponding to the facet of smallest size.  The nerve will not be connected unless that facet is the only facet.  However, if this facet is the only facet, then we contradict the first condition for $j=s(\Delta), i=0$.  Now consider the second condition: suppose $m > s(\Delta)$.  Then take $j = s(\Delta)$, $i=0$.  Then we have a contradiction when $T$ is a facet.

To prove equivalence, we will induct on $j$.  Thus, let us begin by considering the case \(j=0\).  The first set of equations is then \(\ho_{i-1}(\N_{1}(\Delta)) =0 \) for all \(i < m\).  Using Theorem \ref{classic} (1), we get that this statement is equivalent to \(\ho_{i-1}(\Delta) =0 \) for all \(i < m\).  When $j=0$, the second set of equations is in fact \(\ho_{i-1}(\Delta)=0\) for all $i < m$, since \(|T| =0\) implies $T$ is the empty set. Thus we have equivalence when $j=0$.

Now, let us take as our induction hypothesis that our theorem holds for \(j=k-1\).  Consider \(j=k < m\).  Assuming either set of equations holds, the \(j=0\) case again says that \(\ho_{i-1}(\Delta) =0 \) for all \(i < m\).  By Proposition \ref{newNerve} and the \(j=k-1\) case, either set of equations yields \(\ho_{i-1}([\Delta]_{>k})=0\) for all $i < m-(k-1)$.  Therefore, we may apply Lemma \ref{claim1} for all \(i < m-(k-1)-1=m-k\).  Thus, we have
\begin{align*}
\bigoplus_{\substack{T \in \Delta \\ |T|=k}} \ho_{i-1}(\lk_{\Delta}(T)) &\cong \bigoplus_{\substack{T \in \Delta \\ |T|=k}} \ho_{i-1}(\astar_{[\Delta]_{>k-1}}(\rho(T))).\\
\intertext{Applying Lemma \ref{independent}, we get: }
\ho_{i-1}([\Delta]_{>k}) & \cong \bigoplus_{\substack{T \in \Delta \\ |T|=k}} \ho_{i-1}(\astar_{[\Delta]_{>k-1}}(\rho(T))).\\
\intertext{And by Proposition \ref{newNerve}: }
\ho_{i-1}([\Delta]_{>k}) &\cong \ho_{i-1}(\N_{k+1}(\Delta)).
\end{align*}
Thus, we have completed the proof by induction.
\end{proof}

Combining Corollary \ref{reisner} and Theorem \ref{weaken}, we obtain the second part of our main theorem, Theorem \ref{LongConj}, restated here:
\begin{theorem}\label{main2}
For a simplicial complex $\Delta$, $\depth (k[\Delta]) =  \inf \{i+j \ | \ho_i(\N_j(\Delta)) \neq 0\}$.
\end{theorem}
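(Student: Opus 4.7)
The plan is to combine Theorem \ref{weaken} with Reisner's criterion (Corollary \ref{reisner}), since essentially all the technical work has been done in the preceding sections. Let $t = \depth k[\Delta]$. By Corollary \ref{reisner}, $t$ is the largest integer $m$ such that $\tilde{H}_{i-1}(\lk_\Delta T) = 0$ for every $T \in \Delta$ and every $i \geq 0$ with $i + |T| < m$. Setting $j = |T|$, this is precisely condition (2) of Theorem \ref{weaken} at parameter $m$. By Theorem \ref{weaken}, condition (2) is equivalent to condition (1): $\tilde{H}_{i-1}(N_{j+1}(\Delta)) = 0$ for all $i, j \geq 0$ with $i+j < m$.

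I would then re-index the conclusion to match the statement of the theorem. Setting $a = i-1$ and $b = j+1$, the condition $i,j\geq 0$ and $i+j<m$ becomes $a \geq -1$, $b \geq 1$, and $a+b<m$, and the vanishing becomes $\tilde H_a(N_b(\Delta))=0$. Thus $t = \depth k[\Delta]$ is characterized as the largest $m$ such that $\tilde H_a(N_b(\Delta)) = 0$ for all $a\geq -1$, $b\geq 1$ with $a+b<m$.

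The one subtlety is that the theorem, as formulated, uses $\tilde H_i(N_j(\Delta))$ for $i\geq 0$, not $i \geq -1$. I would remove the $a = -1$ contribution as follows: $\tilde H_{-1}(N_b(\Delta))\neq 0$ iff $N_b(\Delta)$ is empty, which happens iff no facet of $\Delta$ has cardinality at least $b$, i.e., $b > d$. Since $\depth k[\Delta] \leq d$, and since we are only ever interested in $m \leq d+1$, any $b \geq 1$ appearing in the constraint $a+b<m\leq d+1$ satisfies $b \leq d$ whenever $a=-1$, so $\tilde H_{-1}(N_b(\Delta))=0$ automatically in the relevant range. Hence the infimum of $a+b$ over the indices $a \geq -1$ and the infimum over $a \geq 0$ coincide, and we obtain
\[
\depth k[\Delta] \;=\; \inf\{\,i+j \;:\; \tilde H_i(N_j(\Delta)) \neq 0,\ i\geq 0,\ j\geq 1\,\},
\]
which is exactly the statement of Theorem \ref{main2}.

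I do not anticipate a genuine obstacle, since the heavy lifting — the equivalence between link vanishing and higher-nerve vanishing — has already been handled in Theorem \ref{weaken}. The only real care is bookkeeping: keeping track of the index shift between the Reisner-style indexing $(i, |T|)$ with $\tilde H_{i-1}(\lk_\Delta T)$ and the nerve-style indexing $(a, b)$ with $\tilde H_a(N_b(\Delta))$, and verifying that the $a = -1$ boundary case contributes nothing new, as explained above.
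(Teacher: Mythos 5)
Your proposal is correct and is essentially the paper's own proof: the paper derives Theorem \ref{main2} precisely by combining Corollary \ref{reisner} with Theorem \ref{weaken}. Your additional care with the index shift and the $\tilde H_{-1}$ boundary case is sound bookkeeping that the paper leaves implicit.
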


\begin{remark}

Since $\depth$ is a topological property (\cite[Theorem 3.1]{Mu84}), we always have $\linebreak \depth k[\Delta]=\depth k[\sd \Delta]$ by Lemma \ref{basecase}. One can apply \cite[Proposition 2.8]{Hi91} repeatedly to show that $\depth [\Delta]_{>j} \ge \depth k[\Delta]-j$ for every $j \le d$.  In particular, by Corollary \ref{reisner}, this implies $\tilde{H}_{i}(N_{j}(\Delta))=0$ for $i<\depth k[\Delta]-j$.  Therefore, one immediately obtains $\depth k[\Delta] \le \inf \{i+j \ | \ho_i(\N_j(\Delta)) \neq 0\}$.  However, the converse to \cite[Proposition 2.8]{Hi91} does not hold, even with additional hypotheses on vanishing of homology, and therefore, these methods are incapable of establishing the reverse inequality.

\end{remark}

\section{The $f$-vector and the $h$-vector}\label{fvector}
In this section, we prove part 3 of Theorem \ref{LongConj}.  We set $\chi(\N_{j}(\Delta))$ to be the Euler characteristic of $N_j(\Delta)$ and $\tilde{\chi}(\N_{j}(\Delta))$ to be the reduced Euler characteristic of $N_j(\Delta)$. 
We use $f_i(\Delta)$ to indicate the $i^{th}$ entry in the $f$-vector of $\Delta$.

\begin{theorem}\label{fvectorthm}
Let $i \geq 0$,
\[f_i(\Delta) = \sum_{j=i+1}^d \binom{j-1}{i} \chi(\N_{j}(\Delta))\]
\end{theorem}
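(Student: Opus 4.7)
The plan is to unravel both sides combinatorially and match them via a double-counting argument. Start with the elementary identity
\[
\chi(N_j(\Delta)) \;=\; \sum_{\varnothing \neq F \subseteq [r]} (-1)^{|F|-1}\,[\, n_F \geq j\,],
\]
where $n_F := |\bigcap_{\ell \in F} A_\ell|$ and $[\,\cdot\,]$ is the Iverson bracket; this is just the definition of Euler characteristic applied to $N_j(\Delta)$, grouping non-empty faces by the indexing set $F$.

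Next I would plug this into the right-hand side of the claimed formula and swap the two sums:
\[
\sum_{j=i+1}^d \binom{j-1}{i}\chi(N_j(\Delta)) \;=\; \sum_{\varnothing \neq F \subseteq [r]}(-1)^{|F|-1} \sum_{j=i+1}^{n_F}\binom{j-1}{i},
\]
where the upper limit has been replaced by $n_F$ since $n_F\le d$ (facets of $\Delta$ have at most $d$ vertices) and the bracket vanishes for $j>n_F$. The inner sum collapses by the hockey-stick identity:
\[
\sum_{j=i+1}^{n_F}\binom{j-1}{i} \;=\; \binom{n_F}{i+1}.
\]

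Finally I would interpret $\binom{n_F}{i+1}$ as counting the $(i+1)$-subsets of $\bigcap_{\ell \in F}A_\ell$, i.e.\ the $i$-faces $\sigma$ of $\Delta$ that are contained in every facet indexed by $F$. Swapping the order of summation one more time, one obtains
\[
\sum_{\varnothing \neq F \subseteq [r]}(-1)^{|F|-1}\binom{n_F}{i+1} \;=\; \sum_{\substack{\sigma \in \Delta \\ |\sigma|=i+1}} \;\sum_{\varnothing \neq F \subseteq S_\sigma}(-1)^{|F|-1},
\]
where $S_\sigma := \{\ell : \sigma \subseteq A_\ell\}$ is the nonempty set of facets containing $\sigma$. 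For each such $\sigma$, the inner alternating sum equals $1 - (1-1)^{|S_\sigma|} = 1$, so the total is exactly $f_i(\Delta)$, as desired.

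There is no real obstacle here: the argument is a straightforward interchange of summations combined with the hockey-stick identity and a standard inclusion–exclusion cancellation. The only small point to record carefully is that $n_F \leq d$ for every nonempty $F$, which lets us drop the artificial cutoff at $j=d$ in the hockey-stick step and recover the clean binomial coefficient $\binom{n_F}{i+1}$.
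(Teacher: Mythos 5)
Your proof is correct, and it is essentially the paper's argument run in the opposite direction: both hinge on the same intermediate quantity $\sum_{\varnothing\neq F}(-1)^{|F|-1}\binom{n_F}{i+1}$, the hockey-stick identity $\sum_{j=i+1}^{n_F}\binom{j-1}{i}=\binom{n_F}{i+1}$, and the alternating-sum cancellation over the Boolean lattice of facet-sets containing a fixed face. The paper packages the double count as a generating-function identity in $x_\alpha, y$ and then regroups by the nerve in which a face first appears, whereas you sum directly from the right-hand side; the combinatorial content is the same.
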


We note that $f_{-1}$ is always $1$.

\begin{proof}


Before we proceed, we introduce some additional notation:

Let $f_{h,k}$ be the number of $h$-faces in $\N_k(\Delta)$. We note that for any complex $\Delta$, $f_{h,k}$ is $0$ for large enough $h$ and for large enough $k$.

If a face appears in $\N_{k+1}(\Delta)$, then that face also appears in $\N_{k}(\Delta)$. We wish to count the $h$-faces of $\N_{k}(\Delta)$ which first appear in $\N_k(\Delta)$. This number is given by
\[f_{h,k}-f_{h,k+1}.\]



For a collection of facets $\rho$ let $\varphi(\rho) = \cap_{F\in \rho} F$. Note that for a given $\alpha \in \Delta$, the set of $\rho$ such that $\alpha \subseteq \varphi(\rho)$ is a Boolean lattice. Let $y,x_1,\dots,x_n$ be indeterminates, and let $x_\alpha = \prod_{i\in\alpha} x_i$. Then,


\[
\sum\limits_\rho \sum\limits_{\alpha \subseteq \varphi(\rho)} (-1)^{|\rho|}x_\alpha y^{|\alpha|}
= \sum_{\alpha \in \Delta} x_\alpha y^{|\alpha|} \sum_{\substack{\rho \\ \alpha \subseteq \varphi(\rho)}} (-1)^{|\rho|}
= 0.
\]

This is because for each $\alpha$, the set of such $\rho$ is Boolean, and therefore, $\sum\limits_{\substack{\rho \\ \alpha \subseteq \varphi(\rho)}} (-1)^{|\rho|} = 0$.

Now, setting $x_i = 1$ for all $i$ and solving for the $\rho = \emptyset$ term yields:

\[
\sum\limits_{\alpha \in \Delta} y^{|\alpha|} \hspace{3pt}=\hspace{3pt}
- \sum\limits_{\rho \neq \emptyset} (-1)^{|\rho|} \sum\limits_{\alpha \subseteq \varphi(\rho)}y^{|\alpha|}
\hspace{3pt}=\hspace{3pt} \sum\limits_{\rho \neq \emptyset} (-1)^{|\rho|-1} \sum\limits_{j=0}^{|\varphi(\rho)|} \binom{|\varphi(\rho)|}{j}y^j.
\]

Taking the $(i+1)^{st}$ coefficient of each side yields:
\begin{align*}
f_i(\Delta) &= \sum\limits_{\substack{\rho \neq \emptyset \\ |\varphi(\rho)| \geq i+1}} (-1)^{|\rho|-1}\binom{|\varphi(\rho)|}{i+1} \\
&= \sum\limits_{h=0}^\infty \sum\limits_{k = i+1}^\infty (-1)^h \binom{k}{i+1} \#\{\rho \ | \ |\rho| - 1 = h, \ \rho \in N_k(\Delta) \backslash N_{k+1}(\Delta) \} \\
&=\sum_{h=0}^\infty (-1)^h \sum_{k=i+1}^\infty \binom{k}{i+i} (f_{h,k}-f_{h,k+1}) \\
&= \sum_{h=0}^\infty (-1)^h \sum_{k=i+1}^\infty   (f_{h,k}-f_{h,k+1})  \sum_{j=i+1}^k \binom{j-1}{i} \\
&= \sum_{h=0}^\infty (-1)^h \sum_{j=i+1}^d     \sum_{k=j}^\infty \binom{j-1}{i} (f_{h,k}-f_{h,k+1})\\
&= \sum_{j=i+1}^d \binom{j-1}{i} \sum_{h=0}^\infty (-1)^h f_{h,j}\\
&= \sum_{j=i+1}^d \binom{j-1}{i} \chi(\N_{j}(\Delta)).
\end{align*}


\end{proof}

For the convenience of the reader, we have worked out the corresponding formula for the $h$-vector $(h_0=1, h_1, \dots, h_d)$ of $\Delta$.  


\begin{cor}\label{hvector}
 For $k\geq 1$ we have: \[h_k(\Delta) = (-1)^{k-1} \sum_{j\geq 1} \binom{d-j}{k-1} \tilde{\chi}(N_j(\Delta)).\]
\end{cor}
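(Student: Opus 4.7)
The plan is to work at the level of the $h$-polynomial and extract coefficients from a generating-function identity. I would start from the standard relation
\[ h_\Delta(t) := \sum_{k=0}^d h_k t^k = \sum_{i=0}^d f_{i-1}\, t^i (1-t)^{d-i}, \]
with the convention $f_{-1}=1$. Substituting $f_{i-1} = \sum_{j=i}^d \binom{j-1}{i-1} \chi(N_j(\Delta))$ (valid for $i \geq 1$ by Theorem \ref{fvectorthm}) and swapping the order of summation produces
\[ h_\Delta(t) = (1-t)^d + \sum_{j=1}^d \chi(N_j(\Delta))\, (1-t)^{d-j} \sum_{i=1}^{j} t^i (1-t)^{j-i} \binom{j-1}{i-1}. \]

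The key step is the identity
\[ \sum_{i=1}^{j} t^i (1-t)^{j-i} \binom{j-1}{i-1} = t \sum_{\ell=0}^{j-1} t^\ell (1-t)^{j-1-\ell} \binom{j-1}{\ell} = t\bigl(t+(1-t)\bigr)^{j-1} = t, \]
obtained by reindexing $\ell = i-1$ and applying the binomial theorem. This collapses the $h$-polynomial to $(1-t)^d + t \sum_{j=1}^d (1-t)^{d-j}\chi(N_j(\Delta))$. Writing $\chi = \tilde{\chi} + 1$ and using the geometric sum $t\sum_{j=1}^d(1-t)^{d-j} = 1-(1-t)^d$, the $+1$ contributions combine with $(1-t)^d$ to give the constant $1$, leaving
\[ h_\Delta(t) = 1 + t \sum_{j=1}^d (1-t)^{d-j}\, \tilde{\chi}(N_j(\Delta)). \]

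Finally, for $k \geq 1$ I would read off the coefficient of $t^k$: since $[t^{k-1}](1-t)^{d-j} = (-1)^{k-1}\binom{d-j}{k-1}$, the desired formula drops out, and the sum may be extended to all $j \geq 1$ since $\binom{d-j}{k-1}=0$ when $d-j<0$. I expect the main obstacle to be spotting the telescoping identity in the second display; once it is in hand, everything else is routine generating-function bookkeeping.
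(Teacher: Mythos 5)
Your proof is correct, and at bottom it follows the same strategy as the paper's: substitute the formula of Theorem \ref{fvectorthm} into the standard $f$-to-$h$ conversion and exchange the order of summation. The difference is purely in packaging, but it is a real improvement. The paper works coefficient-by-coefficient from $h_k=\sum_{i=0}^k\binom{d-i}{k-i}(-1)^{k-i}f_{i-1}$ and must establish the binomial identity $\sum_{i=1}^{j}(-1)^{i-1}\binom{d-i}{d-k}\binom{j-1}{i-1}=\binom{d-j}{k-1}$ by an iterated application of Pascal's rule; your generating-function formulation replaces that identity with the one-line collapse $\sum_{i=1}^{j}t^i(1-t)^{j-i}\binom{j-1}{i-1}=t\bigl(t+(1-t)\bigr)^{j-1}=t$, i.e.\ with the binomial theorem itself, and the passage from $\chi$ to $\tilde{\chi}$ becomes a transparent geometric-series cancellation against the $f_{-1}=1$ term rather than a separate identity $\sum_{j=1}^{d}\binom{d-j}{k-1}=\binom{d}{k}$. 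One cosmetic caveat: extending the sum to all $j\geq 1$ requires the convention $\binom{d-j}{k-1}=0$ for $j>d$ (rather than the generalized binomial coefficient with negative top), but this is exactly the convention the statement itself presupposes, so nothing is lost.
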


We also record the following:

\begin{cor}\label{homtop}

If $\Delta_1$ and $\Delta_2$ are simplicial complexes with $\tilde{H}_{i-1}(N_j(\Delta_1)) \cong \tilde{H}_{i-1}(N_j(\Delta_2))$ for all $i,j$, then $\Delta_1$ and $\Delta_2$ have identical $f$-vectors and h-vectors.

\end{cor}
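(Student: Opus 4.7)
The plan is to observe that the hypothesis is much stronger than what is needed, since it forces the Euler characteristics of all higher nerves to coincide, and then to invoke Theorem \ref{fvectorthm} verbatim.

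First I would pass from homologies to Euler characteristics. Since each $\N_j(\Delta_\ell)$ is a finite simplicial complex, its reduced homology groups are finite-dimensional $k$-vector spaces that vanish outside a finite range. Therefore the isomorphisms $\ho_{i-1}(\N_j(\Delta_1)) \cong \ho_{i-1}(\N_j(\Delta_2))$ for all $i,j$ yield
\[
\tilde{\chi}(\N_j(\Delta_1)) \;=\; \sum_{i \ge -1} (-1)^i \dim_k \ho_i(\N_j(\Delta_1)) \;=\; \tilde{\chi}(\N_j(\Delta_2))
\]
for every $j \ge 1$. Using $\chi = \tilde{\chi} + 1$ on any complex containing $\varnothing$, and noting that the few potentially degenerate nerves (void or irrelevant) contribute $0$ either way, we obtain $\chi(\N_j(\Delta_1)) = \chi(\N_j(\Delta_2))$ for all $j \ge 1$.

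Next I would feed this into Theorem \ref{fvectorthm}. Let $d_\ell = \dim k[\Delta_\ell]$. For $j > d_\ell$ every facet of $\Delta_\ell$ has cardinality strictly less than $j$, so $\N_j(\Delta_\ell)$ has no positive-dimensional faces; in particular $\chi(\N_j(\Delta_\ell)) = 0$ for such $j$. Hence the summation in Theorem \ref{fvectorthm} may be harmlessly extended, and
\[
f_i(\Delta_\ell) \;=\; \sum_{j \ge i+1} \binom{j-1}{i}\,\chi(\N_j(\Delta_\ell))
\]
takes the same value for $\ell = 1$ and $\ell = 2$. Thus $f(\Delta_1) = f(\Delta_2)$, which in turn forces $d_1 = d_2$ (read off as one more than the top index where the $f$-vector is nonzero), and the standard transformation $h_k = \sum_{i=0}^k (-1)^{k-i}\binom{d-i}{k-i} f_{i-1}$ then gives $h(\Delta_1) = h(\Delta_2)$. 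Alternatively one may cite Corollary \ref{hvector}, which expresses the $h$-vector directly in terms of the reduced Euler characteristics of the higher nerves and so yields equality of $h$-vectors with no intermediate step.

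There is essentially no obstacle: once Theorem \ref{fvectorthm} is available, the corollary is a matter of tracking that equal homologies yield equal Euler characteristics and that the formula is purely a function of those characteristics. The only point to be careful about is the handling of degenerate nerves at or beyond the dimension, which is immediate from the definition.
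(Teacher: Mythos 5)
Your proposal is correct and is essentially the paper's own (implicit) argument: the corollary is recorded without proof precisely because it follows immediately from Theorem \ref{fvectorthm} and Corollary \ref{hvector} once one notes that isomorphic homologies force equal (reduced) Euler characteristics of all higher nerves. Your extra care with the degenerate nerves for $j>d_\ell$ is a reasonable bookkeeping point but introduces nothing beyond what the paper takes for granted.
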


\section{LCM-lattice and regularity of monomial ideals}\label{9}

In this section, we use our main theorem, Theorem \ref{LongConj}, to deduce a formula for the Castelnuovo-Mumford regularity of any monomial ideal $I$, denoted by $\reg(I)$. We first fix some notation motivated by \cite{GP99}. Suppose $f_1,\dots, f_r$ are the minimal monomial generators of $I$.

\begin{defn}
We define the $j$-th LCM complex of $I$ to be:

\[L_j(I) := \{ F \subseteq [r] \colon |\lcm_{i\in F}(f_i)| \leq j \}.\]

\end{defn}

\begin{theorem}\label{reg}
Let $I$ be a monomial ideal. Then:
$$\reg(I) = \sup\{j-i \ | \tilde H_i(L_{j}(I)) \neq 0\} .$$
\end{theorem}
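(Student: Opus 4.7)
The plan is to reduce Theorem \ref{reg} to the squarefree case via polarization, translate LCM complexes into higher nerves via Alexander duality, and then invoke Theorem \ref{main2}. First, I would pass from $I$ to its polarization $I^{\pol}$, a squarefree monomial ideal in an expanded polynomial ring $S'=k[y_1,\dots,y_N]$. Polarization preserves Castelnuovo--Mumford regularity and sends each minimal generator $f_i$ to a squarefree monomial $f_i^{\pol}$ of the same total degree; in particular, $|\lcm_{i\in F}(f_i)| = |\lcm_{i\in F}(f_i^{\pol})|$ for every $F\subseteq[r]$. This shows $L_j(I) = L_j(I^{\pol})$ as simplicial complexes on $[r]$, reducing the problem to the squarefree case.

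Next, I would write $I^{\pol} = I_\Delta$ for the Stanley--Reisner complex $\Delta$ on $[N]$, so that each generator $f_i^{\pol}$ is supported on some $A_i\subseteq[N]$ and the facets of the Alexander dual $\Delta^\vee$ are exactly the complements $B_i := [N]\setminus A_i$. The crux is the combinatorial identity
\[
\Big| \bigcap_{i\in F} B_i \Big| \;=\; N - \Big| \bigcup_{i\in F} A_i \Big| \;=\; N - |\lcm_{i\in F}(f_i^{\pol})|,
\]
valid for any $F\subseteq[r]$. It gives at once $F\in N_j(\Delta^\vee)$ if and only if $|\lcm_{i\in F}(f_i^{\pol})| \le N-j$, hence the identification $N_j(\Delta^\vee) = L_{N-j}(I^{\pol})$ between higher nerves of $\Delta^\vee$ and LCM complexes of $I^{\pol}$.

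To conclude, I would combine Terai's duality formula $\reg(I_\Delta) = \pd(k[\Delta^\vee])$ with the Auslander--Buchsbaum identity $\pd(k[\Delta^\vee]) = N - \depth(k[\Delta^\vee])$, and then apply Theorem \ref{main2} to $\Delta^\vee$:
\[
\reg(I) \;=\; N - \inf\{i+j : \tilde H_i(N_j(\Delta^\vee))\neq 0\} \;=\; \sup\{N-i-j : \tilde H_i(N_j(\Delta^\vee))\neq 0\}.
\]
The substitution $m:=N-j$ together with the identification from the previous step rewrites this as $\sup\{m-i : \tilde H_i(L_m(I))\neq 0\}$, which is exactly the claim.

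I do not anticipate a serious obstacle: the identification $L_{N-j}(I^{\pol}) = N_j(\Delta^\vee)$ is purely combinatorial, and the remaining ingredients (polarization preserving regularity, Terai's formula, Auslander--Buchsbaum) are standard. The most delicate bookkeeping is simply to confirm that boundary cases align, e.g., that $N_0(\Delta^\vee)$ is the full simplex on $[r]$ with vanishing reduced homology, and that void LCM complexes contribute nothing to either side of the final supremum, so no contributions are lost in the translation.
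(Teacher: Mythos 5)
Your proposal is correct and follows essentially the same route as the paper's proof: polarization to reduce to the squarefree case, the identification $L_j(I)=N_{n-j}(\Delta)$ via Alexander duality, and then Terai/Eagon--Reiner plus Auslander--Buchsbaum combined with Theorem \ref{main2}. The only difference is cosmetic (you work with $\Delta^\vee$ where the paper names the dual complex $\Delta$ directly), and you spell out the combinatorial identity $|\bigcap_{i\in F}B_i|=N-|\lcm_{i\in F}(f_i^{\pol})|$ that the paper leaves implicit.
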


\begin{proof}
Let $I^{pol} = (g_1,\dots,g_r)$ be the polarization of $I$. Then it is well-known that $\reg(I) = \reg(I^{pol})$ (see for instance \cite[Theorem 21.10]{PE11}). From the construction of the $g_i$'s from the $f_i$'s, it is obvious that for any subset $F \subseteq [r]$, $\lcm_{i\in F}(f_i)$ and $\lcm_{i\in F}(g_i)$ have the same size. Thus, the problem reduces to the case when $I$ is a square-free monomial ideal.

Now let $I^{\vee}$ be the Alexander dual of $I$. It is the Stanley-Reisner ideal of some complex $\Delta$. We have that $$\reg(I) = \pd S/I^{\vee} = n-\depth S/I^{\vee}$$
by the Eagon-Reiner theorem (\cite[Theorem 5.59]{MS05}) and the Auslander-Buchsbaum formula. We now note that each $g_i$ is precisely the product of variables in the complement of the corresponding facet $F_i$ of $\Delta$. Thus $L_j(I) = N_{n-j}(\Delta)$. Putting all of these together, we have:

$$\reg(I)= n-\inf \{i+j \ | \tilde H_i(L_{n-j}(I)) \neq 0\}=\sup\{j-i \ | \tilde H_i(L_{j}(I)) \neq 0\}$$

as desired.

\end{proof}

\begin{remark}
Our formula above should be compared with Theorem 2.1 in \cite{GP99}.

\end{remark}

\section*{Acknowledgments}
We thank the Mathematics Department at KU for excellent and stimulating working conditions. We are grateful to Josep \`Alvarez Montaner,  Sam Payne, Vic Reiner, Jay Schweig and Kazuma Shimomoto for helpful comments on earlier drafts of the paper. We would like to thank two anonymous referees whose suggestions greatly improved the paper. The first author is partially supported by NSA grant FED0073853. The second author would like to acknowledge that this work was partially supported by the National Science Foundation under Grant No. DMS-1440140 while the author was in residence at the Mathematical Sciences Research Institute in Berkely, California, during the Fall 2017 semester.

\bibliographystyle{amsalpha}
\bibliography{mybib}

\end{document}